\newtheorem{theorem}{Theorem}[section]
\newtheorem{proposition}[theorem]{Proposition}
\newtheorem{corollary}[theorem]{Corollary}
\newtheorem*{theorem*}{Theorem}
\theoremstyle{remark}
\newtheorem{remark}[theorem]{Remark}
\newtheorem{definition}[theorem]{Definition}
\newtheorem{example}[theorem]{Example}
\newcommand{\path}{\operatorname{Path}}
\numberwithin{equation}{section}
\newcommand{\N}{\mathbb{N}}
\newcommand{\prim}{\operatorname{Prim}}
\newcommand{\spec}{\operatorname{Spec}}
\newcommand{\MT}{\operatorname{Max}}
\newcommand{\FR}{\operatorname{FR}}
\newcommand{\CL}{\operatorname{Clust}}
\newcommand{\gae}{\lower 2pt \hbox{$\, \buildrel {\scriptstyle >}\over {\scriptstyle
\sim}\,$}}
\newcommand{\lae}{\lower 2pt \hbox{$\, \buildrel {\scriptstyle <}\over {\scriptstyle
\sim}\,$}}
\newcommand{\MU}[1]{
\setbox0\hbox{$#1$}
\setbox1\hbox{$W$}
\ifdim\wd0>\wd1 #1^{\sim} \else \widetilde{#1} \fi
}
\begin{document}
\title[Prime spectrum and primitive ideal space of a graph $C^*$-algebra]{The prime spectrum and primitive ideal space of a graph $\boldsymbol{C^*}$-algebra}

\author{Gene Abrams}

\author{Mark Tomforde}

\address{Department of Mathematics \\ University of Colorado \\ Colorado Springs, CO 80918 \\USA}
\email{abrams@math.uccs.edu}

\address{Department of Mathematics \\ University of Houston \\ Houston, TX 77204-3008 \\USA}
\email{tomforde@math.uh.edu}

\thanks{This work was supported by Collaboration Grants from the Simons Foundation to each author (Simons Foundation Grant \#20894 to Gene Abrams and Simons Foundation Grant \#210035 to Mark Tomforde)}

\date{\today}

\subjclass[2000]{46L55}

\keywords{$C^*$-algebras, graph $C^*$-algebras, prime, primitive}

\begin{abstract}

We describe primitive and prime ideals in the $C^*$-algebra  $C^*(E)$ of a graph $E$  satisfying Condition~(K), together with the topologies on each of these spaces.  In particular, we find that primitive ideals correspond to the set of maximal tails disjoint union the set of finite-return vertices, and that prime ideals correspond to the set of clusters of maximal tails disjoint union the set of finite-return vertices.  
\end{abstract}

\maketitle

\section{Introduction}

\bigskip

It is well known that any primitive $C^*$-algebra must be a prime $C^*$-algebra, and a partial converse was established by Dixmier in the late 1950's when he showed that every separable prime $C^*$-algebra is primitive (see \cite[Corollaire~1]{DixmierPaper}  or \cite[Theorem~A.49]{RW} for a proof).  Weaver proved in \cite{W} that there exist nonseparable $C^*$-algebras that are prime but not primitive, and in \cite{Kat1, Kat3} Katsura described a class of nonseparable topological graph $C^*$-algebras that are prime but not primitive.

In \cite{AT}, the authors identify the necessary and sufficient graph-theoretic conditions on $E$ for the graph $C^*$-algebra $C^*(E)$ to be prime, and to be primitive.  As one consequence, these descriptions provide a wealth of examples of $C^*$-algebras that are prime but not primitive, thereby adding clarification to the long search and eventual discovery of  such algebras in \cite{W}.

In the current article, we use the descriptions of the prime and primitive  graph $C^*$-algebras to characterize the prime and primitive ideals in the graph $C^*$-algebras arising from graphs that satisfy Condition~(K) (Theorem~\ref{prime-primitive-ideal-description-theorem}).  In addition, for a graph $E$ satisfying Condition~(K) we are able to describe the primitive ideal space $\prim C^*(E)$ and the prime spectrum $\spec C^*(E)$, with their respective topologies, in terms of specified subsets of the vertex set of $E$ (Theorem~\ref{SpecAandPrimAhomeomorphisms}).  

In the countable setting, it is well known that primitive ideals are related to sets of vertices called ``maximal tails".  For a countable graph, a maximal tail is  defined to be a set of vertices satisfying three axioms, known as (MT1), (MT2), and (MT3), and it is proven that a set satisfies these three axioms if and only if its complement is equal to the set of all vertices that can reach a ``tail" (i.e. an infinite path or a finite path ending at a singular vertex) --- indeed, it is this fact that motivates the name ``maximal tail".  We show that for (not necessarily countable) graphs it is useful to add a condition (MT4) to the definition of a ``maximal tail".  The new condition (MT4) is automatically satisfied in the countable setting, but does not hold for arbitrary graphs. We show that a set of vertices satisfies (MT1), (MT2), (MT3), and (MT4) if and only if its complement is equal to the set of all vertices that can reach a ``tail", and in analogy with the countable graph case, we call such a set a \emph{maximal tail}.  Additionally, we call a set that is only required to satisfy conditions (MT1), (MT2), and (MT3) a \emph{cluster of maximal tails}.  In a countable graph any cluster of maximal tails is automatically a maximal tail, but this is not true in general.  We then show that for a graph $E$ satisfying Condition~(K), the primitive ideals in $C^*(E)$ correspond to the (disjoint) union of the set of maximal tails with the set of ``finite-return vertices", and that the prime ideals correspond to the (disjoint) union of the  set of clusters of maximal tails with the set of ``finite-return vertices".  In particular, ideals coming from ``finite-return vertices" are always primitive, and the only prime,  non-primitive ideals come from clusters of maximal tails that are not maximal tails.

\section{Preliminaries}

We now briefly review the relevant terminology, notation, and constructions.  For a more complete description of these notions, see \cite{AT}.  While it would perhaps be appropriate to first give all of the germane graph-theoretic terminology and then subsequently discuss all of the relevant $C^*$-algebra results, we will instead  intersperse the graph-theoretic terms throughout  our analysis of graph $C^*$-algebras  as needed.

Throughout this article a set will be called \emph{countable} if it is either finite or countably infinite.  A \emph{graph} $(E^0, E^1, r, s)$ consists of a set $E^0$ of vertices, a set $E^1$ of edges, and maps $r : E^1 \to E^0$ and $s : E^1 \to E^0$ identifying the range and source of each edge.    We note that no restriction is placed on the cardinality of $E^0$, or of $E^1$, or of $s^{-1}(v)$ for any vertex $v\in E^0$.  

 A vertex $v
\in E^0$ is a \emph{sink} if $s^{-1}(v) = \emptyset$, while $v$ is an \emph{infinite emitter} if
$|s^{-1}(v)| = \infty$.  A \emph{singular vertex} is a vertex that
is either a sink or an infinite emitter, and we denote the set of
singular vertices by $E^0_\textnormal{sing}$.  We define 
$E^0_\textnormal{reg} := E^0 \setminus E^0_\textnormal{sing}$, and
refer to the elements of $E^0_\textnormal{reg}$ as \emph{regular
vertices}; i.e., a vertex $v \in E^0$ is a regular vertex if and
only if $0 < |s^{-1}(v)| < \infty$.  

For a graph $E$ we construct a $C^*$-algebra $C^*(E)$ as follows.

\begin{definition} \label{graph-C*-def}
If $E$ is a graph, the \emph{graph $C^*$-algebra} $C^*(E)$ is the universal
$C^*$-algebra generated by mutually orthogonal projections $\{ p_v
: v \in E^0 \}$ and partial isometries with mutually orthogonal
ranges $\{ s_e : e \in E^1 \}$ satisfying
\begin{enumerate}
\item $s_e^* s_e = p_{r(e)}$ \quad  for all $e \in E^1$
\item $s_es_e^* \leq p_{s(e)}$ \quad for all $e \in E^1$
\item $p_v = \sum_{\{ e \in E^1 : s(e) = v \}} s_es_e^* $ \quad for all $v \in E^0_\textnormal{reg}$.
\end{enumerate}
\end{definition}

\noindent We call Conditions (1)--(3) in Definition~\ref{graph-C*-def} the \emph{Cuntz-Krieger relations}.  

A graph is \emph{row-finite}
if it has no infinite emitters.  A graph is \emph{finite} if both $E^0$ and $E^1$ are finite sets.  A graph is \emph{countable} if both $E^0$ and $E^1$ are countable sets.  It is straightforward to show that $E$ is countable if and only if $C^*(E)$ is a separable $C^*$-algebra.

A  \emph{path} $\alpha$ in a graph $E$ is a sequence $\alpha := e_1 e_2
\ldots e_n$ of edges with $r(e_i) = s(e_{i+1})$ for $1 \leq i \leq
n-1$.  We say the path $\alpha$ has \emph{length} $| \alpha| :=n$,
and we let $E^n$ denote the set of paths of length $n$.  We
consider the vertices in $E^0$ to be paths of length zero.  We
define  $\path (E) := \bigcup_{n\geq 0}  E^n$, and we extend the maps $r$ and $s$ to $\path (E)$
as follows: for $\alpha = e_1 e_2 \ldots e_n \in E^n$ with $n\geq
1$, we set $r(\alpha) = r(e_n)$ and $s(\alpha) = s(e_1)$; for
$\alpha = v \in E^0$, we set $r(v) = v = s(v)$.  Also, for $\alpha = e_1e_2\cdots e_n\in \path (E)$,  we let $\alpha^0$ denote the set of vertices that appear in $\alpha$; that is, $$\alpha^0  = \{ s(e_1), r(e_1), \ldots, r(e_n) \}.$$  If $v, w \in E^0$ we write $v \geq w$ to mean that there exists a path $\alpha \in \path (E)$ with $s(\alpha) = v$ and $r(\alpha) = w$. If $A, B \subseteq E^0$, we write $A \geq B$ if for every $v \in A$ there exists $w \in B$ such that $v \geq w$.

\begin{definition}\label{downdirectdef}
A graph $E$ is called  \emph{downward directed} if for all $u, v \in E^0$, there exists $w \in E^0$ such that $u \geq w$ and $v \geq w$.
\end{definition}

For a subset $S\subseteq E^0$, we define $$U(S) := \{ v \in E^0 : v \geq s \ \mbox{ for some } s\in S \};$$ that is,  $U(S)$ is the set of vertices $v$ for which there exists a path from $v$ to $s$, where $s$ is some element of $S$.   In case the set $S$ is a single vertex $w$, we denote $U(\{w\})$ simply by $U(w)$, and we see that $U(S) = \bigcup_{s\in S}U(s)$.   We note that $S\subseteq U(S)$ for any $S$, as $v\geq v$ for each $v\in E^0$ by definition.

\begin{definition}
Let $E = (E^0, E^1, r, s)$ be a graph.   We say  $E$ satisfies the {\it Countable Separation Property}  if there exists a countable set $S\subseteq E^0$ for which $E^0 = U(S)$.
\end{definition}

Trivially any countable graph satisfies the Countable Separation Property, using $S = E^0$.  

\smallskip

A \emph{cycle} $\alpha$ in a graph $E$  is a path $\alpha=e_1 e_2 \ldots e_n$ with length $|\alpha| \geq 1$ and $r(\alpha) = s(\alpha)$.  If $\alpha = e_1e_2 \ldots e_n$ is a cycle, an \emph{exit} for $\alpha$ is an edge $f \in E^1$ such that $s(f) = s(e_i)$ and $f \neq e_i$ for some $i$.  We say that a graph satisfies \emph{Condition~(L)} if every cycle in the graph has an exit.
A \emph{simple cycle} is a cycle $\alpha=e_1 e_2 \ldots e_n$ with $r(e_i) \neq s(e_1)$ for all $1 \leq i \leq n-1$.  We say that a graph satisfies \emph{Condition~(K)} if no vertex in the graph is the source of exactly one simple cycle.  (In other words, a graph satisfies Condition~(K) if and only if every vertex in the graph is the source of no simple cycles or the source of at least two simple cycles.)

We will use the following graph as a running example throughout the paper to illustrate many of our ideas.

\begin{example}\label{EsubTsubXexample}
Let $X$ be any nonempty set, and let $\mathcal{F}(X)$ denote the collection of finite nonempty subsets of $X$.   We define the graph $E_{A}(X)$ as follows.
\begin{align*}
E_{A}(X)^0 \ &:= \ \{v_A : A \in \mathcal{F}(X)\}, \\
E_{A}(X)^1 &:= \{e_{A,A'} : A,A'\in \mathcal{F}(X) \ \mbox{and} \ A\subsetneqq A'\},
\end{align*}
$s(e_{A,A'})=v_A$ for each $e_{A,A'} \in E_{A}(X)^1$, and $r(e_{A,A'})=v_{A'}$ for each $e_{A,A'} \in E_{A}(X)^1$.

There are some immediate observations about the graph $E_A(X)$ that will be very useful throughout this article:
For any vertex $v_A \in E_A(X)$ the set $A$ is finite, and $U(v_A) = \{v_B : B \subseteq A \text{ and } B \neq \emptyset \}$ is a finite set.  In addition, we see that $E_A(X)$ contains no cycles.  Furthermore, if $X$ is an infinite set, then every vertex of $E_A(X)$ is an infinite emitter. 
\end{example}

Two $C^*$-algebraic concepts that play the central roles in this article are \emph{prime} and \emph{primitive}.  We define both now, and note that they are both related to the ideals in a $C^*$-algebra.  For us, the term \emph{ideal} shall always mean a closed, two-sided ideal of a $C^*$-algebra.

\begin{definition} \label{prime-C*-def}
A $C^*$-algebra $A$ is {\it prime} if whenever $I$ and $J$ are closed two-sided ideals of $A$ and $IJ = \{ 0 \}$, then either $I = \{ 0 \}$ or $J = \{ 0 \}$. 
\end{definition}

We note that $IJ = I \cap J $ for any two-sided ideals of a $C^*$-algebra $A$, so that $A$ is prime if whenever $I$ and $J$ are closed two-sided ideals of $A$ and $I\cap J = \{ 0 \}$, then either $I = \{ 0 \}$ or $J = \{ 0 \}$.
  
\begin{definition}  \label{primitive-C*-def}
A $C^*$-algebra $A$ is \emph{primitive} if there exists a faithful irreducible $*$-representation $\pi : A \to B(\mathcal{H})$ for some Hilbert space $\mathcal{H}$. 
\end{definition}

We note that the primitivity of a $C^*$-algebra $A$ as presented in Definition~\ref{primitive-C*-def} coincides with the left (or right) primitivity of $A$ as a ring; see e.g. \cite[Proposition~2.25]{AT}.     The statements in the following remark are well known.  

\begin{remark}\label{primitiveimpliesprime}   
Any  primitive $C^*$-algebra  is prime (see \cite[Lemma 3.2]{AT} for a proof).  Any prime $C^*$-algebra that is also separable is primitive (see either \cite[Corollaire~1]{DixmierPaper} or \cite[Theorem~A.49]{RW} for a proof).  However, there exist nonseparable $C^*$-algebras that are prime but not primitive (see \cite{W} for the first known example).
\end{remark}

\begin{proposition}[Proposition~3.1 of \cite{AT}] \label{primeprop}  
Let $E$ be any graph.  Then $C^*(E)$ is prime if and only if the following two properties hold:
\begin{itemize}
\item[(i)] $E$ satisfies Condition~(L), and
\item[(ii)] $E$ is downward directed.   
\end{itemize}
\end{proposition}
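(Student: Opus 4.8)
The plan is to prove both directions by appealing to the structure theory of gauge-invariant ideals in graph $C^*$-algebras, since the statement being proved (Proposition 3.1 of \cite{AT}) is precisely Proposition~\ref{primeprop} in the excerpt; I will sketch a self-contained argument. First I would recall that for a graph $E$ satisfying Condition~(L), the Cuntz--Krieger uniqueness theorem applies, so every nonzero ideal of $C^*(E)$ contains a vertex projection $p_v$; more generally, for any graph, each nonzero ideal $I$ contains some $p_v$ or at least meets $C^*(E)$ in a way that produces a nonempty ``hereditary saturated'' set $H_I := \{v \in E^0 : p_v \in I\}$, and the basic ideal associated to $H_I$ (together with any breaking vertices) is contained in $I$. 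The key algebraic fact I would use is: if $H \subseteq E^0$ is hereditary and saturated and $v \notin H$, then $p_v \notin I_H$ (the ideal generated by $\{p_w : w \in H\}$), so distinct hereditary saturated sets give distinct ideals, and $p_v \in I$ forces $U(v) \cap (\text{something}) \subseteq$ the relevant set — the upshot being that the vertices reaching a fixed vertex $v$ control the ideal structure.

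For the ``if'' direction, assume $E$ satisfies Condition~(L) and is downward directed; I want to show $C^*(E)$ is prime. Let $I, J$ be nonzero ideals with $I \cap J = \{0\}$. By Condition~(L) and the Cuntz--Krieger uniqueness theorem, $I$ contains some $p_u$ and $J$ contains some $p_v$. By downward directedness there is $w \in E^0$ with $u \geq w$ and $v \geq w$. Since ideals are hereditary in the sense that $p_u \in I$ and $u \geq w$ imply $p_w \in I$ (pushing the projection along the path via the relation $p_{s(e)} \geq s_e s_e^* $ and $s_e^* s_e = p_{r(e)}$, so $p_{r(e)} = s_e^* p_{s(e)} s_e \in I$), we get $p_w \in I$ and $p_w \in J$, hence $0 \neq p_w \in I \cap J$, a contradiction. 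So $C^*(E)$ is prime.

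For the ``only if'' direction, I would prove the contrapositive: if $E$ fails (i) or fails (ii), then $C^*(E)$ is not prime. If $E$ fails Condition~(L), there is a cycle with no exit; the corresponding subalgebra is Morita equivalent to (a corner of) $C(\mathbb{T})$ or $C(\mathbb{T}) \otimes \mathcal{K}$, which is not prime, and one exhibits two nonzero orthogonal ideals inside $C^*(E)$ (the ideal structure of $C^*(E)$ restricted to the cycle-without-exit component splits). If $E$ is not downward directed, pick $u, v \in E^0$ with no common descendant, i.e. $U^{-}(u) := \{w : u \geq w\}$ and $U^{-}(v)$ are disjoint where I mean the downward sets; let $H_u$ be the saturation of the hereditary closure of $\{w : u \geq w\}$ and similarly $H_v$ — since no vertex is below both $u$ and $v$, and hereditary closure only adds vertices reachable from the set, one checks $H_u \cap H_v = \emptyset$ (this requires care with the saturation step, which can add vertices not originally in either set). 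Then $I_{H_u}$ and $I_{H_v}$ are nonzero ideals (nonzero since $p_u \in I_{H_u}$, $p_v \in I_{H_v}$) with $I_{H_u} \cap I_{H_v} = I_{H_u \cap H_v} = \{0\}$, so $C^*(E)$ is not prime.

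The main obstacle is the saturation step in the ``only if'' direction: the hereditary saturated closure of $\{w : u \geq w\}$ can be strictly larger than the mere hereditary closure, and one must verify that adding saturation vertices cannot create an overlap between $H_u$ and $H_v$ when $u$ and $v$ have no common descendant. I expect this to follow from the observation that a vertex added during saturation is a regular vertex all of whose edges land in the set already built, so by an inductive/transfinite argument every vertex of $H_u$ still has a path down into the original hereditary closure of $\{w : u \geq w\}$, hence is $\geq$-above some descendant of $u$; combined with the analogous statement for $H_v$, a common vertex would yield a common descendant of $u$ and $v$. A secondary subtlety is ensuring $I_{H_u} \cap I_{H_v} = I_{H_u \cap H_v}$ in full generality (including infinite emitters and breaking vertices), which is part of the standard gauge-invariant ideal correspondence and which I would cite rather than reprove.
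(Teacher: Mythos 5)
The paper does not reprove this proposition; it is imported verbatim from \cite{AT}, so I am comparing your sketch against the standard argument one expects there. Your overall architecture is the right one and most of it is sound: the ``if'' direction (Cuntz--Krieger uniqueness gives a vertex projection in each nonzero ideal, push it down along a path to a common descendant) is correct and complete; the failure-of-(L) case is correct in substance, though the clean way to finish it is to note that $p_v C^*(E) p_v \cong C(\mathbb{T})$ when $v$ lies on a cycle without exit and that hereditary subalgebras (corners) of prime $C^*$-algebras are prime, rather than the vaguer ``the ideal structure splits.''

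The genuine gap is exactly where you flagged it, and your proposed repair does not close it. From the inductive statement ``every $z$ in the saturation $H_u$ satisfies $z \geq w_1$ for some $w_1$ with $u \geq w_1$,'' together with the analogous statement for $H_v$, you conclude that a common vertex $z \in H_u \cap H_v$ yields a common descendant of $u$ and $v$. It does not: the data $u \geq w_1$, $z \geq w_1$, $v \geq w_2$, $z \geq w_2$ give no relation between $w_1$ and $w_2$, so no common descendant of $u$ and $v$ is produced. The fix is to prove a stronger invariant by the same transfinite induction: for every $z \in H_u$, \emph{every} boundary path (maximal path) emanating from $z$ meets $D(u) := \{w : u \geq w\}$. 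This holds at stage $0$ trivially, and a vertex added by saturation is regular with all of its children already in the set, so every maximal path from it passes through a child and inherits the property. Now take $z \in H_u \cap H_v$ and a single maximal path $\beta$ from $z$; it meets $D(u)$ at some $s(\beta_i)$ and $D(v)$ at some $s(\beta_j)$, and if, say, $i \leq j$ then $s(\beta_j)$ lies in $D(u)$ as well (since $D(u)$ is hereditary and $s(\beta_i) \geq s(\beta_j)$), so $s(\beta_j)$ is a common descendant of $u$ and $v$ --- contradiction. With that lemma in place, your appeal to the gauge-invariant ideal correspondence ($I_{(H_u,\emptyset)} \cap I_{(H_v,\emptyset)} = I_{(H_u \cap H_v, \cdot)} = \{0\}$, via Proposition~\ref{BHRSProp}) finishes the contrapositive correctly.
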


\noindent The next theorem is the main result of \cite{AT}, and we will rely on it heavily in our discussions.

\begin{theorem}[Theorem~3.8 of \cite{AT}] \label{primitivitytheorem} 
Let $E$ be any graph.  Then $C^*(E)$ is primitive if and only if the following three properties hold:
\begin{itemize}
\item[(i)] $E$ satisfies Condition~(L),
\item[(ii)] $E$ is downward directed, and
\item[(iii)] $E$ satisfies the Countable Separation Property.
\end{itemize}
In other words, by Proposition~\ref{primeprop}, $C^*(E)$ is primitive if and only if $C^*(E)$ is prime and $E$ satisfies the Countable Separation Property. 
\end{theorem}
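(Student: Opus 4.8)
The plan is to invoke Proposition~\ref{primeprop} to replace the theorem by its ``in other words'' form: for an arbitrary graph $E$ it suffices to show that $C^*(E)$ is primitive if and only if $C^*(E)$ is prime and $E$ has the Countable Separation Property. Write $A=C^*(E)$. Three standard facts about graph $C^*$-algebras will be used freely: first, $s_e=p_{s(e)}s_ep_{r(e)}$, so that $s_\mu s_\nu^*\neq 0$ forces $r(\mu)=r(\nu)$ and $p_wA=\clspan\{s_\mu s_\nu^*:s(\mu)=w\}$; second, $\{\sum_{v\in F}p_v\}_{F\subseteq E^0\text{ finite}}$ is an approximate unit for $A$, hence a strongly convergent partition of the identity in any nondegenerate representation; third, the Cuntz--Krieger uniqueness theorem: if $E$ satisfies Condition~(L), then any representation of $A$ in which every vertex projection acts nonzero is faithful. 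Since a primitive $C^*$-algebra is prime (Remark~\ref{primitiveimpliesprime}), in the forward direction only the Countable Separation Property must be produced.

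\emph{Necessity of the Countable Separation Property.} Let $\pi\colon A\to B(\Hi)$ be faithful and irreducible, and fix a unit vector $\xi$. Irreducibility makes $\pi$ nondegenerate and $\xi$ cyclic, so $\|\xi\|^2=\sum_{v}\|\pi(p_v)\xi\|^2$ and $S_\xi:=\{v\in E^0:\pi(p_v)\xi\neq 0\}$ is countable. A routine summation along paths---for $a\in S_\xi$ one has $\sum_{e:\,s(e)=a}\|\pi(s_e^*)\xi\|^2\le\|\pi(p_a)\xi\|^2<\infty$, and this estimate iterates over path length---shows that $P:=\{\nu\in\path(E):s(\nu)\in S_\xi,\ \pi(s_\nu^*)\xi\neq 0\}$ is countable, hence so is $S':=\{r(\nu):\nu\in P\}$. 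I claim $E^0=U(S')$. Given $w\in E^0$, faithfulness of $\pi$ gives $\pi(p_w)\neq 0$; choosing a unit vector $\eta\in\pi(p_w)\Hi$ and a vector $\pi(a)\xi$ within distance $\tfrac12$ of it (possible since $\xi$ is cyclic) yields $\pi(p_wa)\xi\neq 0$, and since $p_wA=\clspan\{s_\mu s_\nu^*:s(\mu)=w\}$, continuity of $b\mapsto\pi(b)\xi$ forces $\pi(s_\mu s_\nu^*)\xi\neq 0$ for some such $\mu,\nu$. Then $\pi(s_\nu^*)\xi\neq 0$; because $s_\nu=p_{s(\nu)}s_\nu p_{r(\nu)}$ this gives $s(\nu)\in S_\xi$, so $\nu\in P$ and $r(\mu)=r(\nu)\in S'$; as $w\geq r(\mu)$ we conclude $w\in U(S')$. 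Hence $E$ satisfies the Countable Separation Property.

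\emph{Sufficiency.} Assume $A$ is prime and $E$ has the Countable Separation Property, and fix a countable $S\subseteq E^0$ with $E^0=U(S)$. For $s\in S$ let $I_s$ be the closed ideal of $A$ generated by $p_s$; since $p_s\neq 0$ and $A$ is prime, $I_s$ is a nonzero \emph{essential} ideal (in a prime $C^*$-algebra every nonzero ideal is essential). Consequently the open set $U_s:=\{P\in\prim A:I_s\not\subseteq P\}$ is dense in $\prim A$: its closure is the hull of the ideal $\bigcap_{P\in U_s}P=I_s^{\perp}$, which is $\{0\}$ because $I_s$ is essential. Since $\prim A$ is a Baire space and $S$ is countable, $\bigcap_{s\in S}U_s\neq\emptyset$; choose $P$ in it and write $P=\ker\pi$ for an irreducible representation $\pi$, so $\pi(p_s)\neq 0$ for all $s\in S$. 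For arbitrary $v\in E^0$ pick $s\in S$ and a path $\alpha$ from $v$ to $s$; then $\pi(s_\alpha)^*\pi(s_\alpha)=\pi(p_s)\neq 0$, so $\pi(s_\alpha)\neq 0$ and hence $\pi(p_v)\geq\pi(s_\alpha)\pi(s_\alpha)^*\neq 0$. Thus every vertex projection acts nonzero under $\pi$, so by the Cuntz--Krieger uniqueness theorem (here $E$ satisfies Condition~(L)) $\pi$ is faithful, i.e.\ $P=0$. Hence $0\in\prim A$ and $A$ is primitive.

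The step I expect to be the crux is the \emph{necessity} direction---specifically, recognizing that the obvious countable set $S_\xi$ need not satisfy $U(S_\xi)=E^0$, and that one must instead close it up to ``range vertices of paths seen by $\xi$'' to obtain $S'$, whose countability then rests on the summation-along-paths estimate. In the sufficiency direction there is no comparably hard point once the right packaging is in place; the non-elementary external inputs there are that $\prim A$ is always a Baire space and that the hull of an ideal is nowhere dense precisely when the ideal is essential. (One can instead argue sufficiency by hand: use the Countable Separation Property and downward directedness to build a descending chain $w_1\geq w_2\geq\cdots$ that every vertex eventually reaches, producing a sink or an infinite path to which every vertex connects, and then verify that the associated boundary-path representation---evaluated at a character of its isotropy group when the chosen path is eventually periodic---is irreducible and, by the uniqueness theorem, faithful; the fussy part of that route is the irreducibility in the eventually-periodic case.)
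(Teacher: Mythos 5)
This theorem is imported verbatim from \cite[Theorem~3.8]{AT}; the present paper offers no proof of it, so there is no internal argument to compare yours against. Judged on its own terms, your proof is correct. In the necessity direction you correctly isolate the real difficulty: the countable set $S_\xi$ of vertices seen by a cyclic vector need not satisfy $U(S_\xi)=E^0$, and your enlargement to the set $S'$ of range vertices of paths $\nu$ with $\pi(s_\nu^*)\xi\neq 0$ --- whose countability follows from the orthogonality estimate $\sum_{|\nu|=n,\ s(\nu)=a}\|\pi(s_\nu^*)\xi\|^2\le\|\pi(p_a)\xi\|^2$ for each $a\in S_\xi$ and each length $n$ --- together with the decomposition $p_wC^*(E)=\clspan\{s_\mu s_\nu^*:s(\mu)=w\}$, does give $E^0=U(S')$. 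The sufficiency direction is the natural generalization of Dixmier's separable argument: primeness makes the ideal generated by each $p_s$ essential, the Countable Separation Property leaves only countably many such ideals to control, and Baire category in $\prim C^*(E)$ combined with the Cuntz--Krieger uniqueness theorem yields a faithful irreducible representation; this is essentially the strategy of the proof in \cite{AT}. Two minor caveats, neither fatal: you implicitly assume $E^0\neq\emptyset$ so that $\prim C^*(E)$ is nonempty, and you invoke the uncountable-graph versions of standard facts (the net of finite sums of vertex projections as an approximate unit, the Cuntz--Krieger uniqueness theorem, Baireness of the primitive ideal space), which --- as the paper notes for analogous results from \cite{BHRS} and \cite{Tom9} --- do carry over verbatim from the countable case.
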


\begin{example} \label{prime-not-prim-uncountable-ex}
Let $X$ be any nonempty set and let $E_A(X)$ be the graph from Example~\ref{EsubTsubXexample}.  Since $E_A(X)$ has no cycles, $E_A(X)$ satisfies Condition~(L) vacuously.  In addition, $E_A(X)$ is downward directed because if $v_A, v_B \in E_A(X)^0$ and we define $C:= A \cup B$, then $v_C \in E_A(X)^0$ with $v_A \geq v_C$ and $v_B \geq v_C$.  It follows from Proposition~\ref{primeprop} that $C^*(E_A(X))$ is a prime $C^*$-algebra.

In addition, for any $v_A \in E_A(X)^0$ we have $U(v_A) = \{v_B : B \subseteq A \text{ and } B \neq \emptyset \}$ is a finite set.  Hence $\bigcup_{v_A \in E_A(X)^0} U(v_A)$ is countable if and only if $E_A(X)^0$ is countable if and only if $X$ is countable.  It follows that $E_A(X)$ has the Countable Separation Property if and only if $X$ is countable, and by Theorem~\ref{primitivitytheorem} $C^*(E_A(X))$ is primitive if and only if $X$ is countable.  Thus for any uncountable set $X$, the graph $C^*$-algebra $C^*(E_A(X))$ is a nonseparable $C^*$-algebra that is prime but not primitive.  This gives us a wealth of examples of prime but not primitive $C^*$-algebras.
\end{example}

\section{Prime and primitive ideals of a graph $C^*$-algebra}

Our preliminary discussion has centered around the notions of primeness and primitivity of a $C^*$-algebra.   These two notions extend in a natural way to the ideals of a $C^*$-algebra, as follows.  

\begin{definition}
Let $A$ be a $C^*$-algebra, and let $I$ be an ideal in $A$.  The ideal $I$ is \emph{primitive} if there exists a faithful irreducible $*$-representation $\pi : A \to B(\mathcal{H})$ for some Hilbert space $\mathcal{H}$, such that $I = \ker \pi$.  The ideal $I$ is \emph{prime} if whenever $J$ and $K$ are ideals of $A$ and $J \cap K \subseteq I$, then either $J \subseteq I$ or $K \subseteq I$.
\end{definition}

Let $A$ be a $C^*$-algebra and let $I$ be an ideal of $A$.  Observe that $I$ is a primitive ideal if and only if $A/I$ is a primitive $C^*$-algebra.  Likewise, $I$ is a prime ideal if and only if $A/I$ is a prime $C^*$-algebra.  In particular,  if $A$ is any $C^*$-algebra, then $A$ is a primitive $C^*$-algebra precisely when $\{ 0 \}$ is a primitive ideal of $A$, and $A$ is a prime $C^*$-algebra precisely when  $\{ 0 \}$ is a prime ideal of $A$.

Our goal is to describe the prime and primitive ideals of a graph $C^*$-algebras $C^*(E)$ in terms of explicit subsets of the vertices of $E$.  As we shall see, subsets called {\it maximal tails} and {\it clusters of maximal tails} will play key roles.

\begin{definition}\label{tailsdefinition}
Let $E$ be a graph.   An \emph{infinite path in} $E$  is a sequence $\lambda = e_1, e_2, \dots$ having $r(e_i) = s(e_{i+1})$ for all $i\geq 1$.   We denote the set of infinite paths in $E$ by $E^\infty$.  (We note that an infinite path is not an element of ${\rm Path}(E)$.)   For $\lambda = e_1, e_2, ... \in E^\infty$ we denote the set $\{s(e_i) : i\in \N\}$ by $\lambda^0$.  The \emph{boundary paths} of $E$ are the elements of the set
$$\partial E := E^\infty \cup \{ \alpha \in \bigcup_{n \geq 0} E^n : r(\alpha) \in E^0_\textnormal{sing} \}.$$
We emphasize that each element of $E^0_{\rm{sing}}$ (i.e., each sink and each infinite emitter of $E$) is in $\partial E$.   For any $\alpha \in \partial E$ we define the \emph{maximal   tail determined by $\alpha$} to be the set
$$ \ T_\alpha \ :=  \  U(\alpha^0) \ = \ \{ v \in E^0 : v \geq \alpha^0 \}.$$
\noindent
In particular, if $w\in E^0_{\rm{sing}}$ then $T_w = \{ v\in E^0 : v \geq w\}$. 
 \begin{enumerate}
\item We call a subset $T \subseteq E^0$ a \emph{maximal tail} if $T=T_\alpha$ for some $\alpha \in \partial E$.  

\smallskip

\item We call a subset $M \subseteq E^0$ a \emph{union of maximal tails} if there is a set of boundary paths $S \subseteq \partial E$ such that $M = \bigcup_{\alpha \in S} T_\alpha$.  

\smallskip

\item We call a subset $C \subseteq E^0$ a \emph{cluster of maximal tails} if $C$ is a union of maximal tails and $C$ is downward directed (see Definition~\ref{downdirectdef}).
\end{enumerate}
\end{definition}

Let $T_\alpha$ be a maximal tail, and write $\alpha \in \partial E$ as $ e_1e_2 ...$, where $\alpha$ either has finite length (and ends at a singular vertex), or $\alpha \in E^\infty$.  Let $u,v \in T_\alpha$.   Then there exists $i,j \in \N$ for which $u \geq s(e_i)$ and $v\geq s(e_j)$, and if we set $k = {\rm max}(i,j)$, then $u\geq s(e_k)$ and $v\geq s(e_k)$.  Hence $T_\alpha$ is downward directed.   

We see that every maximal tail is a cluster of maximal tails (since every maximal tail is downward directed), and that every cluster of maximal tails is a union of maximal tails (by definition).  However, neither of these implications can be reversed in general:  the set of vertices of the graph $\xymatrix{ \bullet & \bullet \ar[r] \ar[l] & \bullet}$ is a union of maximal tails that is not a cluster of maximal tails, and the following example provides an example of a cluster of maximal tails that is not a maximal tail.

\begin{example}\label{clustermaxtailsnotamaxtailexample}
Let $X$ be an uncountable set, and let $E_A(X)$ be the graph of Example~\ref{EsubTsubXexample}.  Let $C := E_A(X)^0$.  Every vertex in $E_A(X)$ is an infinite emitter, so $C = \bigcup_{v \in E_A(X)^0} T_v$ is a union of maximal tails, and since $E_A(X)$ is downward directed (see Example~\ref{prime-not-prim-uncountable-ex}) $C$ is also a cluster of maximal tails.  However, for any maximal tail $T_\alpha$ in $E_A(X)$, we have $T_\alpha = U(\alpha^0) = U(s(\alpha)) \cup \bigcup_{i=1}^{|\alpha|} U(r(\alpha_i))$.  Since $U(v_A) = \{ B : B \subseteq A \text{ and } B \neq \emptyset \}$ is finite for all $v_A \in E_A(X)^0$, it follows that $T_\alpha$ is a countable set.  Since $X$ is uncountable, $E_A(X)$ has an uncountable number of vertices.  Hence $C$ is uncountable, and $C$ is not a maximal tail.
\end{example}

\noindent Corollary~\ref{countable-clust-max-tail-cor} shows that clusters of maximal tails that are not maximal tails can only occur in uncountable graphs.

\begin{definition}
Let $E$ be a graph and $S \subseteq E^0$. The following properties of $S$ were studied in \cite[\S6]{BPRS} and \cite[\S1]{HS4}.  The terminology ``MT" is meant to stand for ``Maximal Tail".  

$ $

\begin{itemize}
\item[(MT1)] If $v \in E^0$, $w \in S$, and $v \geq w$, then $v \in S$.  
\item[(MT2)] If $v \in S$ and $v \in E^0_\textnormal{reg}$, then there exists $e \in E^1$ with $s(e) = v$ and $r(e) \in S$.  
\item[(MT3)] $S$ is downward directed.  
\end{itemize}

$ $

\noindent In addition to these previously studied properties, we introduce a fourth property that will be useful for us.

\smallskip

\begin{itemize}
\item[(MT4)] $S$ satisfies the Countable Separation Property. 
\end{itemize}

$ $

\noindent In \cite[\S6]{BPRS} and \cite[\S1]{HS4} the authors  consider  only countable graphs, and hence all their graphs trivially satisfy (MT4).  In our investigations, we allow uncountable graphs, and we find that (MT4) is an important property that  is not automatically satisfied and must be analyzed.
\end{definition}

\begin{remark}
Note that if $S \subseteq E^0$ satisfies (MT1), then whenever $A \subseteq E^0$ and $A \geq S$, it follows that $A \subseteq S$.
\end{remark}

We note also that the empty set $\emptyset$ vacuously satisfies each of the four (MT) conditions.

The next proposition shows that the (MT1)--(MT4) properties of Definition~\ref{tailsdefinition} can be used to characterize maximal tails as well as unions and clusters of maximal tails.

\begin{proposition} \label{MT-axioms-tails-prop}
Let $E = (E^0, E^1, r, s)$ be a graph.
\begin{itemize}
\item[(1)] A subset $M \subseteq E^0$ is a union of maximal tails if and only if $M$ satisfies (MT1) and (MT2).
\item[(2)] A subset $C \subseteq E^0$ is a cluster of maximal tails if and only if $C$ satisfies (MT1), (MT2), and (MT3).
\item[(3)] A subset $T \subseteq E^0$ is a maximal tail if and only if $T$ satisfies (MT1), (MT2), (MT3), and (MT4).
\end{itemize}
\end{proposition}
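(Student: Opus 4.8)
The plan is to prove the three equivalences in sequence, from the weakest set of axioms to the strongest, since each builds on the previous one. For part (1), I would prove both containments directly. For the forward direction, suppose $M = \bigcup_{\alpha \in S} T_\alpha$ is a union of maximal tails. Each $T_\alpha = U(\alpha^0)$ clearly satisfies (MT1) (if $v \geq w$ and $w \geq \alpha^0$ then $v \geq \alpha^0$), and an arbitrary union of sets satisfying (MT1) again satisfies (MT1). For (MT2), if $v \in M$ is regular, then $v \in T_\alpha$ for some $\alpha = e_1 e_2 \cdots$, so $v \geq s(e_i)$ for some $i$ via a path $\beta$; if $\beta$ has positive length its first edge $f$ has $s(f) = v$ and $r(f) \geq s(e_i)$, so $r(f) \in T_\alpha \subseteq M$; if $\beta$ has length zero then $v = s(e_i) \in \alpha^0$, and since $v$ is regular it is not the singular endpoint of $\alpha$, so $e_i$ exists with $s(e_i) = v$ and $r(e_i) = s(e_{i+1}) \in \alpha^0 \subseteq T_\alpha$. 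For the reverse direction, given $M$ satisfying (MT1) and (MT2), I would show $M = \bigcup_{v \in M} T_{\alpha_v}$ where for each $v \in M$ we construct a boundary path $\alpha_v$ lying entirely in $M$ with $s(\alpha_v) = v$: starting at $v$, repeatedly use (MT2) to extend the path within $M$ as long as we are at a regular vertex, halting (with a finite boundary path) if we reach a singular vertex, or producing an infinite path otherwise. Then $v \in T_{\alpha_v} \subseteq M$ since (MT1) forces $T_{\alpha_v} = U(\alpha_v^0) \subseteq M$ (every vertex of $\alpha_v^0$ is in $M$, and anything reaching $M$ is in $M$).

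For part (2), given part (1), this is nearly immediate: a cluster of maximal tails is by definition a union of maximal tails that is downward directed, i.e.\ (MT1) and (MT2) by part (1), plus (MT3); conversely any set satisfying (MT1), (MT2), (MT3) is a union of maximal tails by part (1) and downward directed, hence a cluster of maximal tails. For part (3), the forward direction uses the remark already made in the text that every maximal tail $T_\alpha$ is downward directed, together with part (2) (which gives (MT1), (MT2), (MT3)); it remains to check (MT4), and here the natural countable separating set is $S = \alpha^0$, which is countable (a path, finite or of length $\aleph_0$) and satisfies $T_\alpha = U(\alpha^0)$ by definition, so $T_\alpha$ has the Countable Separation Property with respect to itself.

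The main obstacle is the reverse direction of part (3): given $T$ satisfying (MT1)--(MT4), produce a single boundary path $\alpha$ with $T = T_\alpha = U(\alpha^0)$. By part (2) we already know $T = \bigcup_{v \in T} T_{\alpha_v}$ is a cluster, and by (MT4) there is a countable set $S_0 = \{w_1, w_2, \dots\} \subseteq T$ with $T = U(S_0)$; the goal is to merge the countably many vertices $w_n$ into one boundary path. Here I would use downward directedness (MT3) together with (MT2) to build $\alpha$ greedily: first use (MT3) repeatedly to find, for each $n$, a common descendant of $w_1, \dots, w_n$ inside $T$ (downward directedness for finitely many vertices follows from the two-vertex case by induction, and each such descendant lies in $T$ by (MT1) since it is reached from $w_i \in T$). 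Then thread a path through these descendants: having constructed a finite path in $T$ passing through (or past) a common descendant of $w_1,\dots,w_n$, use (MT2)/(MT3) to extend it to reach a common descendant of $w_1, \dots, w_{n+1}$, unless we hit a singular vertex, in which case the Cuntz--Krieger setup and (MT1) still guarantee that the finite path so far already has $\alpha^0$ reaching everything below it --- but one must check that halting is consistent with $T = U(\alpha^0)$. The delicate point is ensuring that $U(\alpha^0)$ is exactly $T$ and not smaller: $U(\alpha^0) \subseteq T$ holds by (MT1) (all vertices of $\alpha$ lie in $T$), while $T = U(S_0) \subseteq U(\alpha^0)$ requires that every $w_n$ reaches $\alpha^0$, which is arranged by construction since $\alpha$ passes a common descendant of $w_1, \dots, w_n$ and $w_n$ reaches that descendant. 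I would also need to handle the edge case where the construction terminates at a singular vertex after finitely many steps: then only finitely many $w_n$ need be accommodated, or the remaining $w_n$ all reach that singular vertex via (MT3), so $\alpha$ can be taken to be that finite boundary path. Assembling these pieces carefully, with attention to whether each extension step is possible and whether the resulting $\alpha$ is genuinely a boundary path (infinite, or finite ending at a singular vertex), is the technical heart of the argument.
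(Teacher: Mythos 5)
Your proposal is correct and follows essentially the same route as the paper: the same case analysis for (MT2) in the forward direction of part (1), the same recursive construction of boundary paths $\alpha_v$ for its converse, part (2) as an immediate consequence, and for the converse of part (3) the same device of using (MT3) to produce successive common descendants of a countable separating set, threading a single path through them, and extending it to a boundary path via (MT2). One small slip worth fixing: the common descendants of $w_1,\dots,w_n$ lie in $T$ because (MT3) asserts that $T$ itself is downward directed (so the descendant can be chosen inside $T$), not by (MT1), which only gives upward closure.
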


\begin{proof}
For the forward implication in (1), we let $M \subseteq E^0$ and suppose $M$ is a union of maximal tails.  Then there exists a set of boundary paths $S \subseteq \partial E$ such that $M = \bigcup_{\alpha \in S} T_\alpha$.  Let $v \in E^0$ and $w \in M$ with $v \geq w$.  Since $w \in M$, we have $w \in T_\alpha$ for some $\alpha \in S$.  Thus $w \geq \alpha^0$, and since $v \geq w$, it follows that $v \geq \alpha^0$ and $v \in T_\alpha \subseteq M$.  Thus $M$ satisfies (MT1).  In addition, if $v \in M$ and $v \in E^0_\textnormal{reg}$, then the fact that $v \in M$ implies that $v \in T_\alpha$ for some $\alpha \in S$, and hence $v \geq \alpha^0$.  If $v \notin \alpha^0$, then there exists an edge $e \in E^1$ such that $s(e) = v$ and $r(e) \geq \alpha^0$, so that $r(e) \in T_\alpha \subseteq M$.  On the other hand, if $v \in \alpha^0$, then the fact that $v \in E^0_\textnormal{reg}$ and the definition of $\partial E$ implies that there exists an edge $e$ of the path $\alpha$ with $s(e) = v$ and $r(e) \in \alpha^0$, so that $r(e) \in T_\alpha \subseteq M$.  Thus $M$ satisfies (MT2).

For the converse implication in (1), let $M \subseteq E^0$ and suppose that $M$ satisfies (MT1) and (MT2).  Property~(MT2) implies that for each $w \in M$, either $w \in E^0_\textnormal{sing}$ or there exists an edge $e$ with $s(e) = w$ and $r(e) \in M$.  We may continue this process recursively, to produce for each $w \in M$ a boundary path $\alpha_w \in \partial E$ with $s(\alpha_w) = w$.  We let $T_{\alpha_w}$ be the associated maximal tail, and consider $\bigcup_{w \in M} T_{\alpha_w}$.   We shall show that $M = \bigcup_{w \in M} T_{\alpha_w}$.  If $v \in M$, then $s(\alpha_v) = v$, so $v \in T_{\alpha_v} \subseteq  \bigcup_{w \in M} T_{\alpha_w}$.  Thus $M \subseteq \bigcup_{w \in M} T_{\alpha_w}$.  For the reverse inclusion, let $v \in \bigcup_{w \in M} T_{\alpha_w}$, then $v \in T_{\alpha_w}$ for some $w \in M$.  Hence $v \geq \alpha_w^0$ and there exists $x \in \alpha_w^0$ such that $v \geq \alpha_w^0$.   By the way $\alpha_w$ was produced, $\alpha_w^0 \subseteq M$.  Thus there exists $x \in M$ such that $v \geq x$.  Since $x \in M$, it follows from (MT1) that $v \in M$.  Thus $M = \bigcup_{w \in M} T_{\alpha_w}$.   Hence $M$ is a union of maximal tails. 

For (2), suppose that $C \subseteq E^0$ is a cluster of maximal tails.  By definition $C$ is a union of maximal tails that is downward directed.  Thus $C$ satisfies (MT1) and (MT2) by part (1), and since $C$ is downward directed $C$ satisfies (MT3).  Conversely, if $C \subseteq E^0$ and $C$ satisfies (MT1), (MT2), and (MT3), then part (1) implies $C$ is a union of maximal tails, and (MT3) implies $C$ is downward directed, so $C$ is a cluster of maximal tails.

For the forward implication in (3), suppose $T \subseteq E^0$ is a maximal tail.  Then $T = T_\alpha$ for some $\alpha \in E^0$.  Since any maximal tail is a cluster of maximal tails, it follows from part~(2) that $T$ satisfies (MT1), (MT2), and (MT3).  In addition, if we let $X := \alpha^0$, then $X$ is a countable set and we see that $T = T_\alpha = \{ v \in E^0 : v \geq \alpha^0 \} = \bigcup_{x \in X} U(x)$, so that $T$ satisfies (MT4).  

For the converse implication in (3), let $T \subseteq E^0$ and suppose that $T$ satisfies (MT1), (MT2), (MT3), and (MT4).  By property~(MT4), there is a countable set $X \subseteq T$ such that $T = \bigcup_{x \in X} U(x)$.  List the elements of $X$ as $X = \{ x_1, x_2, x_3, \ldots \} = \{ x_k \}_{k \in I}$.  Note that the index set $I$  may be finite or countably infinite.  Choose any vertex $v_1 \in T$, and use (MT3) to choose a vertex $v_2 \in T$ such that $x_1 \geq v_2$ and $v_1 \geq v_2$.  Next, use (MT3) to choose a vertex $v_3 \in T$ such that $x_2 \geq v_3$ and $v_2 \geq v_3$.  In this way, we may recursively construct a (finite or countably infinite) list of vertices $v_1, v_2, v_3, ... \in T$ such that $x_k \geq v_{k+1}$ and $v_k \geq v_{k+1}$ for all $k \in I$.  Since $v_1 \geq v_2 \geq v_3 \geq \ldots$, for each $k$ we may choose a (finite)  path $\lambda_k$ with $s(\lambda_k) = v_k$ and $r(\lambda_k) = v_{k+1}$.  Then $\alpha := \lambda_1 \lambda_2 \lambda_3 \ldots$ is a (finite or infinite) path with $x \geq \alpha^0$ for all $x \in X$.  Since $r(\lambda_k) = v_{k+1} \in T$ for all $k\in I$, it follows from (MT1) that $\alpha^0 \subseteq T$.  If $\alpha$ is an infinite path, then $\alpha \in \partial E$.  If $\alpha$ is a finite path, then either $r(\alpha) \in E^0_\textnormal{sing}$ or $r(\alpha) \in E^0_\textnormal{reg}$.  If $r(\alpha) \in E^0_\textnormal{sing}$, then $\alpha \in \partial E$.  If $r(\alpha) \in E^0_\textnormal{reg}$, we may use (MT2) to produce an edge $e \in E^1$ such that $s(e) = r(\alpha)$ and $r(e) \in T$.  Continuing this process, we may extend $\alpha$ to either an infinite path or a finite path whose range is in $E^0_\textnormal{sing}$.  In either case this extension, which we shall also denote by $\alpha$, is an element of $\partial E$.  Hence in any case we may produce a boundary path $\alpha \in \partial E$ with $\alpha^0 \subseteq T$ and $x \geq \alpha^0$ for all $x \in X$.  Since $\alpha^0 \subseteq T$, (MT1) implies that $T_\alpha \subseteq T$.  For the reverse inclusion, let $v \in T$.  The fact that $T = \bigcup_{x \in X} U(x)$ implies there exists $x \in X$ such that $v \geq x$.  By the construction of $\alpha$, we have that $x \geq \alpha^0$.  Hence $v \geq \alpha^0$, and it follows that $x \in T_\alpha$.  Thus $T = T_\alpha$, and $T$ is a maximal tail.
\end{proof}

\begin{corollary} \label{countable-clust-max-tail-cor}
Let $E = (E^0, E^1, r, s)$ be a graph.  If $M \subseteq E^0$ is a cluster of maximal tails and $M$ is countable, then $M$ is a maximal tail.  In particular, in a countable graph, a subset of vertices is a cluster of maximal tails if and only if that subset is a maximal tail. 
\end{corollary}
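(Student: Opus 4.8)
The plan is to reduce everything to Proposition~\ref{MT-axioms-tails-prop}. Suppose $M \subseteq E^0$ is a cluster of maximal tails with $M$ countable. By Proposition~\ref{MT-axioms-tails-prop}(2), $M$ satisfies (MT1), (MT2), and (MT3); and by Proposition~\ref{MT-axioms-tails-prop}(3), in order to conclude that $M$ is a maximal tail it therefore suffices to show that $M$ also satisfies (MT4), i.e.\ that $M$ has the Countable Separation Property.

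First I would observe that the obvious choice of separating set works: take $X := M$. Since $M$ is countable by hypothesis, $X$ is a countable subset of $E^0$. Because $v \geq v$ for every $v \in M$, we have $M \subseteq U(X) = \bigcup_{v \in M} U(v)$; and because $M$ satisfies (MT1), each $U(v)$ with $v \in M$ is contained in $M$, so $U(X) \subseteq M$. Hence $M = U(X)$ with $X$ countable, which is precisely (MT4). Applying Proposition~\ref{MT-axioms-tails-prop}(3) now gives that $M$ is a maximal tail, establishing the first assertion.

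For the final sentence of the corollary, suppose $E$ is a countable graph. Then $E^0$ is countable, so any cluster of maximal tails $C \subseteq E^0$ is in particular a countable cluster of maximal tails, hence a maximal tail by what was just proved. The reverse implication needs no countability hypothesis: every maximal tail is downward directed and is trivially a union of maximal tails, hence a cluster of maximal tails, as already noted following Definition~\ref{tailsdefinition}. Therefore in a countable graph the two notions coincide.

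I do not expect a genuine obstacle here: once Proposition~\ref{MT-axioms-tails-prop} is available, the argument is a short bookkeeping exercise, and the only point requiring a moment's care is unwinding the definition of (MT4) --- namely recognizing that, thanks to (MT1), the statement ``$M$ has the Countable Separation Property'' is witnessed by $M$ itself whenever $M$ is countable.
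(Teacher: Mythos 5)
Your proof is correct and is exactly the intended argument: the paper leaves the corollary as an immediate consequence of Proposition~\ref{MT-axioms-tails-prop}, the point being that a countable set satisfying (MT1) witnesses its own Countable Separation Property (take $X = M$), so (MT4) comes for free. Nothing to add.
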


\begin{remark}
It follows from Proposition~\ref{MT-axioms-tails-prop} that \emph{maximal tails} are precisely those subsets of vertices satisfying properties (MT1)--(MT4), and \emph{clusters of maximal tails} are precisely those subsets of vertices satisfying properties (MT1)--(MT3).  Since (MT4) is automatically satisfied for any subset of a countable graph, the notions of maximal tails and clusters of maximal tails coincide in countable graphs.  In the literature predating this article (e.g., \cite{BPRS}, \cite{BHRS}, and \cite{HS4}), the definition of a ``maximal tail" was  taken to be a subset of vertices satisfying (MT1)--(MT3).  Since most of the prior graph $C^*$-algebra literature has (often implicitly) worked under the assumption that the graphs considered are countable, these earlier definitions of a ``maximal tail" agree with ours in the setting of countable graphs.  However, for our purposes in working with possibly uncountable graphs, it is important that we add property (MT4) to the definition of a ``maximal tail", and that we distinguish between the notions of ``maximal tail" and ``cluster of maximal tails".
\end{remark}

\begin{definition} \label{graph-ideals-quotients-def}
If $E = (E^0, E^1, r, s)$ is a graph, a subset $H \subseteq E^0$ is \emph{hereditary} if $e \in E^1$ and $s(e) \in H$ implies $r(e) \in H$.  A hereditary subset $H$ is called \emph{saturated} if $v \in E^0_\textnormal{reg}$ and $r (s^{-1}(v)) \subseteq H$ implies that $v \in H$.  For any saturated set $H$, we let $$B_H := \{ v \in E^0_\textnormal{sing} : 0 < | r(s^{-1}(v)) \cap (E^0 \setminus H) | < \infty \}$$
denote the set of \emph{breaking vertices} of $H$.  An \emph{admissible pair} $(H,S)$ is a pair consisting of a saturated hereditary set $H$ and a subset $S \subseteq B_H$ of breaking vertices for $H$. 

 For an admissible pair $(H,S)$ in $E$, we define the graph $E \setminus (H,S)$ as follows:  $$(E \setminus (H,S))^0 := (E^0 \setminus H) \cup \{v' : v \in B_H \setminus S \},$$ edge set 
$$(E \setminus (H,S))^1 := s^{-1} (H) \cup \{e' : e \in E^1 \text{ and } r(e) \in B_H \setminus S \},$$ and range and source maps extended from $r|_{s^{-1} (H)}$ and $s|_{s^{-1} (H)}$ by defining $r(e') := r(e)'$ and $s(e') := s(e)$. We note that for $e' \in E \setminus (H,S)$, the vertex $r(e')$ is a sink. 
\end{definition}

 If $(H,S)$ is an admissible pair in $E$ and $v\in S$, we define the \emph{gap projection} corresponding to $v$ to be the following element of $C^*(E)$:
 $$p_v^H := p_v - \sum_{\{ e \in E^1 : s(e) = v \text{ and } r(e) \notin H \} } s_es_e^*.$$  Note that the sum appearing is finite and nonzero. For any graph $E = (E^0, E^1, r, s)$ and any admissible pair $(H,S)$ of $E$, we define $I_{(H,S)}$ to be the closed two-sided ideal in $C^*(E)$ generated by the set 
 $$\{ p_v : v \in H \} \cup  \{p_v^H: v\in S\}.$$

It is shown in \cite[Corollary~3.5]{BHRS} that for any graph $E$ and any admissible pair $(E,H)$ of $E$, one has $$C^*(E) / I_{(H,S)} \cong C^*(E \setminus (H,S)).$$  In addition, it is shown in \cite[Theorem~3.6]{BHRS} that for any graph $E$ the map $(H,S) \mapsto I_{(H,S)}$ is a bijection from the collection of admissible pairs of $E$ onto the collection of gauge-invariant ideals of $C^*(E)$.  Also, it is shown in \cite[Theorem~2.1.19]{Tom9} that $E$ satisfies Condition~(K) if and only if all ideals of $C^*(E)$ are gauge invariant.  (Although the results of \cite[Corollary~3.5]{BHRS}, \cite[Theorem~3.6]{BHRS}, and \cite[Theorem~2.1.19]{Tom9} are stated for countable graphs, the results still hold and the proofs go through verbatim when the graphs are not necessarily countable.)  These results imply that when $E$ satisfies Condition~(K) the map $(H,S) \mapsto I_{(H,S)}$ is a bijection from the collection of admissible pairs of $E$ onto the collection of ideals of $C^*(E)$.  

\begin{remark}\label{KimpliesquotientshaveL}
It is well known (and a fairly straightforward exercise) to show that a graph $E$ satisfies Condition~(K) if and only if for each admissible pair $(E,H)$ the graph $E \setminus (H,S)$ satisfies Condition~(L).
\end{remark}

The following is one of the main results of this paper and allows us to describe prime and primitive ideals in the $C^*$-algebra of a graph satisfying Condition~(K).

\begin{theorem} \label{prime-primitive-ideal-description-theorem}
Let $E = (E^0, E^1, r, s)$ be a graph that satisfies Condition~(K), and let $I \triangleleft C^*(E)$ be a closed two-sided ideal of $C^*(E)$.  Then the following statements hold. 
\begin{itemize}
\item[(1)] $I = I_{(H,S)}$ where $E^0 \setminus H$ is a union of maximal tails and $S \subseteq B_H$.
\item[(2)] $I$ is a primitive ideal if and only if one of the following two situations occurs:
\begin{itemize}
\item[(a)] $I = I_{(H, S)}$ with $E^0 \setminus H$ a maximal tail and $S = B_H$.
\item[(b)] $I = I_{(H, S)}$ with $E^0 \setminus H = T_{v_0}$ for some $v_0 \in B_H$ and $S = B_H \setminus \{ v_0 \}$.
\end{itemize}
\item[(3)] $I$ is a prime ideal if and only if one of the following two situations occurs:
\begin{itemize}
\item[(a)] $I = I_{(H, S)}$ with $E^0 \setminus H$ a cluster of maximal tails and $S = B_H$.
\item[(b)] $I$ is a primitive ideal as described in case 2(b); namely, that  $I = I_{(H, S)}$ with $E^0 \setminus H = T_{v_0}$ for some $v_0 \in B_H$ and $S = B_H \setminus \{ v_0 \}$.
\end{itemize}
\end{itemize} 
\end{theorem}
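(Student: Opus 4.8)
The plan is to reduce the entire statement to the primeness and primitivity criteria for \emph{quotient} graphs. Since $E$ satisfies Condition~(K), every ideal of $C^*(E)$ has the form $I_{(H,S)}$ for an admissible pair $(H,S)$, and $C^*(E)/I_{(H,S)} \cong C^*(E \setminus (H,S))$; moreover, by Remark~\ref{KimpliesquotientshaveL}, every quotient graph $E \setminus (H,S)$ satisfies Condition~(L). Hence, writing $F := E \setminus (H,S)$ and applying Proposition~\ref{primeprop} and Theorem~\ref{primitivitytheorem} to $F$, we obtain: $I_{(H,S)}$ is prime $\iff$ $F$ is downward directed, and $I_{(H,S)}$ is primitive $\iff$ $F$ is downward directed \emph{and} satisfies the Countable Separation Property. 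So the whole problem becomes a translation of these two graph properties of $F$ back into conditions on $(H,S)$ inside $E$ (throughout we take $I$ proper, i.e.\ $H \neq E^0$).

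Part~(1) is quick: Condition~(K) lets us write $I = I_{(H,S)}$ with $(H,S)$ admissible, so $S \subseteq B_H$ by definition; and to see that $E^0 \setminus H$ is a union of maximal tails it suffices, by Proposition~\ref{MT-axioms-tails-prop}(1), to verify (MT1) and (MT2). But ``$H$ hereditary'' says precisely that $E^0 \setminus H$ is closed under taking sources of edges, which by induction along a path is exactly (MT1), and ``$H$ saturated'' is exactly the contrapositive of (MT2) for $E^0 \setminus H$.

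For parts (2) and (3) I first record the structure of $F$. Its vertices are the elements of $E^0 \setminus H$ together with a sink $v'$ for each $v \in B_H \setminus S$; reachability among the unprimed vertices of $F$ coincides with reachability in $E$ restricted to $E^0 \setminus H$ (using that $H$ is hereditary, so a path into a vertex of $E^0\setminus H$ stays in $E^0 \setminus H$, and that primed vertices are sinks, so cannot occur in the interior of any path); and for unprimed $u$ one has $u \geq v'$ in $F$ iff $u$ reaches, in $E$, the source of some edge of $E$ with range $v$. The crucial preliminary observation is that \emph{if $F$ is downward directed then $|B_H \setminus S| \leq 1$}, since two distinct primed sinks have no common lower bound. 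This produces two cases. If $S = B_H$, then $F$ is essentially $E^0 \setminus H$ with its induced structure, so $F$ downward directed $\iff$ $E^0 \setminus H$ is downward directed $\iff$ (by part~(1) and Proposition~\ref{MT-axioms-tails-prop}(2)) $E^0 \setminus H$ is a cluster of maximal tails, and $F$ has the Countable Separation Property $\iff$ $E^0 \setminus H$ satisfies (MT4) $\iff$ (by part~(1) and Proposition~\ref{MT-axioms-tails-prop}(3)) $E^0 \setminus H$ is a maximal tail; these give cases 2(a) and 3(a). If instead $B_H \setminus S = \{v_0\}$, then in a downward directed $F$ every unprimed vertex must reach the sink $v_0'$ (its only possible common lower bound with $v_0'$), hence, by the description of reaching $v_0'$, must reach $v_0$ in $E$, so $E^0 \setminus H \subseteq U(v_0) = T_{v_0}$, while $T_{v_0} \subseteq E^0 \setminus H$ by (MT1); thus $E^0 \setminus H = T_{v_0}$ with $v_0 \in B_H$. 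Conversely, when $E^0 \setminus H = T_{v_0}$ one checks that \emph{every} vertex of $F$ reaches $v_0'$: since $v_0 \in B_H$ emits an edge whose range lies in $T_{v_0} = U(v_0)$, there is a cycle through $v_0$ inside $E^0 \setminus H$, which survives in $F$, and whose final edge produces a primed edge into $v_0'$; together with (MT1) this shows every unprimed vertex, and trivially $v_0'$ itself, reaches $v_0'$. Hence $F$ is downward directed \emph{and} has the Countable Separation Property (separate by $\{v_0'\}$), so $I_{(H,S)}$ is not merely prime but primitive --- which is exactly why case~(b) appears identically in 2(b) and 3(b). Finally, since primitive $\Rightarrow$ prime, no cases beyond 2(a)/2(b) and 3(a)/3(b) arise, completing the proof.

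I expect the main obstacle to be the reachability bookkeeping in $F = E \setminus (H,S)$: precisely, pinning down that each added primed vertex $v'$ is a sink inheriting the \emph{incoming} edges of $v$, and the resulting fact that a breaking vertex $v_0$ with $E^0 \setminus H = T_{v_0}$ necessarily lies on a cycle contained in $E^0 \setminus H$ --- it is this cycle that forces the Countable Separation Property for free in case~(b), and without noticing it one might wrongly expect an extra hypothesis in 2(b)/3(b).
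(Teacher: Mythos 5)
Your proposal is correct and takes essentially the same approach as the paper: both reduce to the quotient graph $E\setminus(H,S)$ via Condition~(K), invoke Proposition~\ref{primeprop} and Theorem~\ref{primitivitytheorem}, use downward directedness to force at most one added sink, and then split into the cases $S=B_H$ and $S=B_H\setminus\{v_0\}$, with the cycle through $v_0$ giving primitivity for free in case~(b) exactly as in the paper's argument.
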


\begin{proof}
Throughout the proof we use the notation and terminology of Definition~\ref{graph-ideals-quotients-def}.

For part~(1), since $E$ satisfies Condition~(K), all ideals of $C^*(E)$ are gauge-invariant and $I = I_{(H,S)}$ for some saturated hereditary subset $H \subseteq E^0$ and some set of breaking vertices $S \subseteq B_H$.  Since $H$ is hereditary, $E^0 \setminus H$ satisfies (MT1).  Since $H$ is saturated, $E^0 \setminus H$ satisfies (MT2).  It follows from Proposition~\ref{MT-axioms-tails-prop} that $E^0 \setminus H$ is a union of maximal tails.

For part~(2), first suppose that $I$ is a primitive ideal.  From part~(1) $I= I_{(H,S)}$ for a saturated hereditary subset $H \subseteq E^0$ and a set of breaking vertices $S \subseteq B_H$, and $E^0 \setminus H$ satisfies (MT1) and (MT2).  Since $I= I_{(H,S)}$ is a primitive ideal, $C^*(E) / I_{(H,S)} \cong C^*(E \setminus (H,S))$ is a primitive $C^*$-algebra.  It follows from Theorem~\ref{primitivitytheorem} that $E \setminus (H,S)$ is downward directed and satisfies the Countable Separation Property.  Because $E \setminus (H,S)$ is downward directed, $E \setminus (H,S)$ contains at most one sink, and thus $S = B_H$ or $S = B_H \setminus \{ v_0 \}$ for some breaking vertex $v_0 \in B_H$.  We consider each possibility.  If $S = B_H$, then $(E \setminus (H,S))^0 = E^0 \setminus H$, and $E^0 \setminus H$ is downward directed and satisfies the Countable Separation Property, so $E^0 \setminus H$ satisfies (MT3) and (MT4).  Thus Proposition~\ref{MT-axioms-tails-prop} implies that $E^0 \setminus H$ is a maximal tail.  On the other hand, if $S = B_H \setminus \{ v_0 \}$ for some breaking vertex $v_0 \in B_H$, then $E \setminus (H,S)$ contains a single sink $v_0'$ coming from the breaking vertex $v_0$.  The fact $E \setminus (H,S)$ is downward directed implies that every vertex in $E \setminus (H,S)$ can reach the sink $v_0'$, and hence every vertex in $E^0 \setminus H$ can reach $v_0$ via a path in $E$.  Thus $E^0 \setminus H = T_{v_0}$.  

For the converse of part~(2), first suppose $I = I_{(H,S)}$ with $E^0 \setminus H$ a maximal tail.  By Proposition~\ref{MT-axioms-tails-prop}, $E^0 \setminus H$ satisfies (MT1), (MT2), (MT3), and (MT4).  The fact that $E^0 \setminus H$ satisfies (MT1) and (MT2) implies that $H$ is a saturated hereditary subset, and it follows from the definition of Condition~(K) that $E \setminus (H,S)$ satisfies Condition~(L).  We now consider the two possibilities described in (2)(a) and (2)(b): If $S=B_H$, then the vertex set of $E \setminus (H,S)$ is $(E \setminus (H,S))^0 = E^0 \setminus H$ and since $E^0 \setminus H$ satisfies (MT3) and (MT4), we see that the graph $E \setminus (H,S)$ is downward directed and satisfies the Countable Separation Property.  If $E^0 \setminus H = T_{v_0}$ for some $v_0 \in B_H$ and $S = B_H \setminus \{ v_0 \}$, then the vertex set of $E \setminus (H,S)$ is $(E \setminus (H,S))^0 = (E^0 \setminus H) \cup \{ v_0' \}$.  Since $v_0 \in B_H$, there is an edge in $E$ from $v_0$ to $E^0 \setminus H$.  Since $E^0 \setminus H = T_{v_0}$ there is a path of positive length in $E$ from $v_0$ to $v_0$.  It follows that there is a path in $E \setminus (H,S)$ from $v_0$ to $v_0'$.  Thus there is a path in $E \setminus (H,S)$ from every vertex in $E \setminus (H,S)$ to $v_0'$, and hence $E \setminus (H,S)$ is downward directed and satisfies the Countable Separation Property.  Thus in either of the cases of (2)(a) or (2)(b), $E \setminus (H,S)$  satisfies Condition~(L), is downward directed, and satisfies the Countable Separation Property.  It follows from Theorem~\ref{primitivitytheorem} that $C^*(E \setminus (H,S)) \cong C^*(E) / I_{(H,S)}$ is a primitive $C^*$-algebra, and hence $I_{(H,S)}$ is a primitive ideal.

For part~(3), first suppose that $I$ is a prime ideal.  From part~(1) $I= I_{(H,S)}$ for a saturated hereditary subset $H \subseteq E^0$ and a set of breaking vertices $S \subseteq B_H$, and $E^0 \setminus H$ satisfies (MT1) and (MT2).  Since $I= I_{(H,S)}$ is a prime ideal, $C^*(E) / I_{(H,S)} \cong C^*(E \setminus (H,S))$ is a prime $C^*$-algebra.  It follows from Proposition~\ref{primeprop}  that $E \setminus (H,S)$ is downward directed.  Because $E \setminus (H,S)$ is downward directed, $E \setminus (H,S)$ contains at most one sink, and thus $S = B_H$ or $S = B_H \setminus \{ v_0 \}$ for some breaking vertex $v_0 \in B_H$.  We consider each possibility.  If $S = B_H$, then $(E \setminus (H,S))^0 = E^0 \setminus H$, and $E^0 \setminus H$ is downward directed, so $E^0 \setminus H$ satisfies (MT3).  Thus Proposition~\ref{MT-axioms-tails-prop} implies that $E^0 \setminus H$ is a cluster of maximal tails.  On the other hand, if $S = B_H \setminus \{ v_0 \}$ for some breaking vertex $v_0 \in B_H$, then the result of part (2)(b) shows that $I$ is primitive ideal.

For the converse of part~(3), first suppose $I = I_{(H,S)}$ with $E^0 \setminus H$ a cluster of maximal tails.  By Proposition~\ref{MT-axioms-tails-prop}, $E^0 \setminus H$ satisfies (MT1), (MT2), and (MT3).  The fact that $E^0 \setminus H$ satisfies (MT1) and (MT2) implies that $H$ is a saturated hereditary subset, and it follows from Remark \ref{KimpliesquotientshaveL} that $E \setminus (H,S)$ satisfies Condition~(L).  We now consider the two possibilities described in (3)(a) and (3)(b): If $S=B_H$, then the vertex set of $E \setminus (H,S)$ is $(E \setminus (H,S))^0 = E^0 \setminus H$ and since $E^0 \setminus H$ satisfies (MT3), we see that the graph $E \setminus (H,S)$ is downward directed.  It follows from Proposition~\ref{primeprop} that $C^*(E \setminus (H,S)) \cong C^*(E) / I_{(H,S)}$ is a prime $C^*$-algebra, and hence $I_{(H,S)}$ is a prime ideal.

If $E^0 \setminus H = T_{v_0}$ for some $v_0 \in B_H$ and $S = B_H \setminus \{ v_0 \}$, then it follows from the result of part (2)(b) that $I$ is a primitive ideal, and hence (see Remark~\ref{primitiveimpliesprime}) that $I$ is a prime ideal.
\end{proof}

\begin{corollary}
If $I_{(H,S)}$ is prime and $S \neq B_H$, then $I_{(H,S)}$ is primitive.
\end{corollary}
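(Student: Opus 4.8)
The plan is to read this off directly from part~(3) of Theorem~\ref{prime-primitive-ideal-description-theorem}. Assuming $I_{(H,S)}$ is prime, that part of the theorem tells us one of exactly two situations occurs: either (a) $E^0\setminus H$ is a cluster of maximal tails and $S=B_H$, or (b) $I_{(H,S)}$ is primitive (more precisely, $E^0\setminus H=T_{v_0}$ for some $v_0\in B_H$ and $S=B_H\setminus\{v_0\}$). Since the hypothesis is $S\neq B_H$, situation (a) is impossible, so situation (b) must hold, and (b) asserts exactly that $I_{(H,S)}$ is primitive. The only point that needs checking is that the dichotomy in Theorem~\ref{prime-primitive-ideal-description-theorem}(3) is genuinely exhaustive for a prime ideal, but this is precisely the content of the ``if and only if'' there, so there is essentially no obstacle: all the work has already been done in the proof of the theorem.

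It is worth recording that the corollary in fact does not require Condition~(K) and can be proved for an arbitrary graph $E$ directly, granting only that $I=I_{(H,S)}$. Indeed, $C^*(E)/I_{(H,S)}\cong C^*(E\setminus(H,S))$ by \cite[Corollary~3.5]{BHRS}, so primeness of $I_{(H,S)}$ makes $C^*(E\setminus(H,S))$ prime, whence by Proposition~\ref{primeprop} the graph $E\setminus(H,S)$ satisfies Condition~(L) and is downward directed. A downward directed graph has at most one sink; and since $S\neq B_H$ there is some $v_0\in B_H\setminus S$, and by the construction of $E\setminus(H,S)$ the associated vertex $v_0'$ is a sink. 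Hence $v_0'$ is the unique sink of $E\setminus(H,S)$, and downward directedness forces every vertex of $E\setminus(H,S)$ to reach $v_0'$, i.e.\ $(E\setminus(H,S))^0=U(\{v_0'\})$. Thus $E\setminus(H,S)$ satisfies the Countable Separation Property via the one-element set $\{v_0'\}$, and since it also satisfies Condition~(L) and is downward directed, Theorem~\ref{primitivitytheorem} shows $C^*(E\setminus(H,S))\cong C^*(E)/I_{(H,S)}$ is primitive, so $I_{(H,S)}$ is a primitive ideal.

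I would present the first argument as the proof, since it is a one-line consequence of the theorem, and perhaps add the second as a remark to emphasize that the hypothesis on $E$ is not needed. In neither route is there a genuine difficulty; if I had to name the subtlest point, it would be confirming in the direct argument that the primed vertex $v_0'$ really is a sink of $E\setminus(H,S)$ (it never occurs as a source) and that ``downward directed $+$ a sink'' yields ``every vertex reaches the sink,'' both of which are immediate from the definitions in Definition~\ref{graph-ideals-quotients-def} and Definition~\ref{downdirectdef}.
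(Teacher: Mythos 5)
Your first argument is correct and is exactly how the paper intends the corollary to be read: the paper gives no separate proof, treating it as immediate from the exhaustive dichotomy in Theorem~\ref{prime-primitive-ideal-description-theorem}(3), and your elimination of case 3(a) via $S \neq B_H$ is precisely that reading. Your second, Condition-(K)-free argument is a genuine strengthening and is also correct: primeness of $C^*(E\setminus(H,S))$ gives downward directedness via Proposition~\ref{primeprop}, the vertex $v_0'$ for $v_0 \in B_H \setminus S$ is a sink since primed vertices never occur as sources in $E\setminus(H,S)$, downward directedness forces a unique sink which every vertex reaches (for any $u$ there is $w$ with $u \geq w$ and $v_0' \geq w$, and $w = v_0'$ since $v_0'$ is a sink), and the resulting one-element separating set gives the Countable Separation Property, so Theorem~\ref{primitivitytheorem} applies. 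What this buys is that the statement ``$I_{(H,S)}$ prime with $S \neq B_H$ implies primitive'' holds for an arbitrary graph and admissible pair, not just under Condition~(K); the paper's route buys nothing extra here but requires no additional work beyond the theorem already proved.
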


The above corollary shows us that if we seek ideals of $C^*(E)$ that are prime but not primitive, we must look for ideals of the form $I_{(H,S)}$ with $S = B_H$.  
In Example \ref{primenotprimitiveexample} below we present examples of primitive ideals arising as in case 2(a) of Theorem \ref{prime-primitive-ideal-description-theorem}, and of   prime but not primitive ideals arising as in case 3(a) of Theorem \ref{prime-primitive-ideal-description-theorem}.  
 On the other hand, we present here  an example of a (necessarily primitive) ideal of a graph C$^*$-algebra which arises as in the identical cases 2(b) and 3(b) of Theorem \ref{prime-primitive-ideal-description-theorem}.  

\begin{example}\label{finitereturnexample}   Let $E$ be the graph  pictured here, where the double arrows with $\infty$ indicate a countable number of edges between the respective vertices.


$$
\xymatrix{ & & & w \ar@{=>}[dr]^{\infty}  \ar[d] & & \\
t & u \ar[l] \ar@{=>}[r]^{\infty} & v \ar[r] & x \ar@{=>}[r]^{\infty} \ar@(ul,ur)[ll]_g \ar@(ul,ur)[ll]  \ar@(dl,dr)[lll]
\ar@(dl,dr)_f \ar@(dr,dl)[rr] & y \ar[r] & z 
\ar@(dl,dr)_d \ar@(ul,ur)^e
\\
}
$$

\bigskip
\noindent This graph is countable, so the maximal tails of $E$ coincide with the clusters of maximal tails of $E$.  In addition, the countability of $E$ implies that $C^*(E)$ is separable, so that the primitive ideals of $C^*(E)$ coincide with the prime ideals of $C^*(E)$.  

There are four maximal tails in $E$, specifically:
$$
T_1 = \{t, u, v, w, x \}, \ T_2 = \{ u, v, w, x \}, \ 
T_3 = \{w \},  \text{ and }  T_4 = \{u,v,w,x,y,z \}.
$$
These maximal tails arise as sets of the form $T_\alpha$ for various (not necessarily unique) elements of $\partial E$.  For instance, $T_1 = T_t$ for the sink $t$, and  $T_3 = T_w$ for the infinite emitter $w$.  As well,   $T_2 = T_u = T_x $ for the infinite emitters $u$ and $x$;  $T_2$ may also be viewed as  $T_{fff\cdots}$ for the infinite path $fff\cdots$, and as $T_\beta$ where $\beta$ is any one of the infinite paths having vertex sequence $xuvxuvxuvx\cdots$.   We see in addition  that $T_4 = T_\alpha$ for any of the (uncountably many different) infinite paths based at $z$ (e.g., $T_4 = T_{eee\cdots} = T_{ddd\cdots} = T_{edede\cdots} = \hdots$).    The complements $H_i = E^0 \setminus T_i$ of these maximal tails give the following saturated hereditary subsets of $E^0$
\begin{align*}
H_1 = \{y, z \}, \quad H_2  = \{ t, y, z \}, \quad 
H_3  = \{t, u, v, x, y, z \}, \ \text{ and } \ H_4   = \{t \} ,
\end{align*}
whose corresponding sets of breaking vertices are easily seen to be 
$$
B_{H_1} =  \{w, x \}, \quad B_{H_2} =  \{ w,x \}, \quad B_{H_3} =  \emptyset, \ \text{ and } \ B_{H_4} = \emptyset.
$$
The only infinite emitter $v_0$ with the property that $v_0$ is a breaking vertex of $E^0 \setminus T_{v_0}$ is the vertex $v_0 = x$.  Thus the primitive (and also prime) ideals of $C^*(E)$ are precisely
$$I_{(H_1, B_{H_1})}, \quad I_{(H_2, B_{H_2})}, \quad I_{(H_2 , B_{H_2} \setminus \{ x \})}, \quad I_{(H_3 , B_{H_3})}, \ \text{ and } \ I_{(H_4, B_{H_4})}. $$
\end{example}

\begin{remark}
For a graph $E$ and field $K$ one may form the $K$-algebra $L_K(E)$, the {\it Leavitt path algebra of } $E$ {\it with coefficients in } $K$.    In \cite{R}, the prime and primitive ideals of $L_K(E)$ are described in terms of the structure of $E$ together with the set of irreducible polynomials of  the $K$-algebra  $K[x,x^{-1}]$.   While the prime and primitive ideal structures of $C^*(E)$ and $L_K(E)$ are quite different in general, there is some intriguing similarity in these structures, as, for example, both Condition (MT3) and the Countable Separation Property  play roles in the results of \cite{R} as well.    
\end{remark}

\section{The prime spectrum and the primitive ideal space of $C^*$-algebras of graphs satisfying Condition~(K)}

The prime spectrum of a commutative ring, consisting of the set of prime ideals of the ring endowed with the Zariski topology, is a valuable tool in both commutative ring theory and algebraic geometry.   In $C^*$-algebra theory, this notion is generalized to noncommutative $C^*$-algebras using the space of primitive ideals endowed with the hull-kernel topology (also called the Jacobson topology).  Any commutative $C^*$-algebra is isomorphic to the space of continuous functions vanishing at infinity on the primitive ideal space of the $C^*$-algebra, and a noncommutative $C^*$-algebra may be viewed as generalized ``noncommutative functions" on the primitive ideal space.  Because of this viewpoint, the study of $C^*$-algebras is sometimes referred to as \emph{noncommutative topology}.  The primitive ideal space is also a useful invariant of a $C^*$-algebra.  Unlike the situation in an arbitrary algebra, any ideal of a $C^*$-algebra is equal to the intersection of all primitive ideals containing it (see \cite[Theorem~5.4.3]{Mur} or \cite[Theorem~A.17(a)]{RW}).  Consequently, the ideal lattice of a $C^*$-algebra may be reconstructed from its primitive ideal space (both the set of primitive ideals and the topology on this set is needed).  Hence the primitive ideal space provides a concise means of recording the entire ideal lattice of a $C^*$-algebra.  This is one of several reasons why the primitive ideal space is of great interest to C*-algebraists.

When a $C^*$-algebra is separable, the prime ideals and the primitive ideals coincide and the prime spectrum is equal to the primitive ideal space.  For nonseparable $C^*$-algebras the primitive ideal space is a (possibly proper) subspace of the prime spectrum.  In this section we compute the prime spectrum and primitive ideal space of the $C^*$-algebra of a graph satisfying Condition~(K), and we describe each of these spaces and their topologies in terms of the graph.

\begin{definition}
If $A$ is a $C^*$-algebra, we define 
$$\spec A := \{ I : \text{$I$ is a prime ideal of $A$} \}.$$
For any subset $X \subseteq \spec A$, we define the closure of $X$ to be $$\overline{X} = \{ I \in \spec A : \bigcap_{J \in X} J \subseteq I \}.$$  This closure operation satisfies the Kuratowski closure axioms (see \cite[\S1.3]{HockingYoung} for more details on the Kuratowski closure axioms), and consequently there is a unique topology on $\spec A$ such that the closed sets of this topology are those $X \subseteq \spec A$ with $\overline{X} = X$.  We call $\spec A$ with this topology the \emph{prime spectrum} of $A$.  

We also define 
$$\prim A := \{ I : \text{$I$ is a primitive ideal of $A$} \}.$$
It follows (see Remark~\ref{primitiveimpliesprime}) that $\prim A \subseteq \spec A$;  we give $\prim A$ the subspace topology it inherits as a subset of $\spec A$.  We call $\prim A$ with this topology the \emph{primitive ideal space} of $A$.
\end{definition}

\begin{remark}
If $A$ is a $C^*$-algebra, the topologies on $\spec A$ and $\prim A$ are always $T_0$, but neither of the topologies is necessarily $T_1$.
\end{remark}

The fact that $\prim A$ is a dense subset of $\spec A$ is straightforward, and likely not new, but since we know of no reference, we provide a proof here.

\begin{proposition}
If $A$ is a $C^*$-algebra, $\prim A$ is a dense subspace of $\spec A$.  Moreover, when $A$ is separable $\prim A = \spec A$.
\end{proposition}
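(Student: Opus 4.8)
The plan is to derive both claims from two facts already in hand: Dixmier's theorem that every separable prime $C^*$-algebra is primitive (recorded in Remark~\ref{primitiveimpliesprime}), and the fact that in any $C^*$-algebra every ideal is the intersection of the primitive ideals containing it (\cite[Theorem~5.4.3]{Mur} or \cite[Theorem~A.17(a)]{RW}). I would keep in mind throughout that $\prim A \subseteq \spec A$ by Remark~\ref{primitiveimpliesprime}, and that $\prim A$ is given the subspace topology; in particular, to prove $\prim A = \spec A$ as topological spaces it is enough to prove the underlying sets coincide.

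For the ``moreover'' clause, I would take any $I \in \spec A$, so that $A/I$ is a prime $C^*$-algebra. Since a quotient of a separable $C^*$-algebra is separable (the image of a countable dense subset under the quotient map is dense), $A/I$ is separable, hence primitive by Dixmier's theorem. Because $I$ is a primitive ideal exactly when $A/I$ is a primitive $C^*$-algebra, we get $I \in \prim A$. This shows $\spec A \subseteq \prim A$, and together with the reverse inclusion gives $\prim A = \spec A$ as sets, hence as topological spaces.

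For density in general I would compute the closure of $\prim A$ directly from the definition of the closure operation on $\spec A$, namely $\overline{\prim A} = \{ I \in \spec A : \bigcap_{J \in \prim A} J \subseteq I \}$. Applying the intersection-of-primitive-ideals fact to the zero ideal gives $\bigcap_{J \in \prim A} J = \{0\}$ (every primitive ideal contains $\{0\}$, and by that fact their intersection is $\{0\}$). Thus the defining condition $\bigcap_{J \in \prim A} J \subseteq I$ holds for every $I \in \spec A$, so $\overline{\prim A} = \spec A$; that is, $\prim A$ is dense in $\spec A$.

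I do not expect a real obstacle: the argument is essentially formal once the two imported facts are invoked. The only points worth a sentence of care are the elementary remark that quotients of separable $C^*$-algebras are separable, and the bookkeeping observation that equality of the underlying sets of $\prim A$ and $\spec A$ automatically becomes an equality of topological spaces because $\prim A$ carries the subspace topology.
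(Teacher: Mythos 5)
Your proof is correct, and your density argument takes a cleaner route than the paper's. Both rest on the same imported fact --- every ideal of a $C^*$-algebra is the intersection of the primitive ideals containing it --- but you apply it once, to the zero ideal, to get $\bigcap_{J \in \prim A} J = \{0\}$, and then read off $\overline{\prim A} = \{ I \in \spec A : \{0\} \subseteq I \} = \spec A$ directly from the closure operator that defines the topology. The paper instead argues pointwise: for a prime ideal $I$ and an open set $U$ containing $I$ with closed complement $X$, it writes $I = \bigcap \{ J \in \prim A : I \subseteq J \}$, picks an element $a \in \bigcap_{L \in X} L$ with $a \notin I$, and extracts a primitive ideal $J \supseteq I$ with $a \notin J$, so that $J \in U \cap \prim A$. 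Your version buys brevity and avoids the element-chasing; the only ingredient it uses that the paper's argument does not make explicit is the standard fact that for a Kuratowski closure operator the topological closure of a set in the induced topology coincides with the operator applied to that set --- but the paper already invokes the Kuratowski axioms to define the topology, so this is freely available. The ``moreover'' clause is handled identically in both proofs via Dixmier's theorem, and your added observation that quotients of separable $C^*$-algebras are separable is exactly the detail worth supplying there.
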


\begin{proof}
Let $A$ be a $C^*$-algebra. Suppose $I \in \spec A$ is a prime ideal.  Every ideal in a $C^*$-algebra is the intersection of the primitive ideals that contain it (see \cite[Theorem~5.4.3]{Mur}).  Thus $I = \bigcap_{J \in \mathcal{S}} J$ where $\mathcal{S} = \{ J \in \prim A : I \subseteq J \}$.  Suppose that $U \subseteq \spec A$ is an open subset with $I \in U$.  Then $X := (\spec A) \setminus U$ is a closed subset of $\spec A$ that does not contain $I$.  Since $X$ is closed, $$X = \overline{X} = \{ K \in \spec A : \bigcap_{L \in X} L \subseteq K \}.$$  Because $I \notin X$, it follows that $\bigcap_{L \in X} L \nsubseteq I$.  Hence there exists an element $a \in \bigcap_{L \in X} L \notin I$.  Since $I = \bigcap_{J \in \mathcal{S}} J$, there exists a primitive ideal $J$ with $a \notin J$ and $I \subseteq J$.  Since $a \notin J$, we have that $\bigcap_{L \in X} L \nsubseteq J$.  Thus $J \notin X$; i.e.,  $J \in U = (\spec A) \setminus X$.  Hence $J \in U \cap \prim A$.  So we have shown that every open set of $\spec A$ containing $I$ intersects $\prim A$ nontrivially, and so the closure of $\prim A$ is $\spec A$.   
Finally, if $A$ is separable, then every prime ideal is primitive (by the aforementioned result of Dixmier),  and $\prim A = \spec A$. 
\end{proof}

\begin{definition}
If $E$ is a graph, we let $$\MT (E) := \{ T_\alpha : \alpha \in \partial E \}$$ denote the set of maximal tails of $E$, and let $$\CL (E) := \{ C \subseteq E^0 : \text{$C$ is a cluster of maximal tails} \}$$ denote the set of clusters of maximal tails of $E$.
\end{definition}

\begin{definition}
We say that an infinite emitter $v \in E^0_\textnormal{sing}$ is a \emph{finite-return vertex} if $0 < | \{ e\in s^{-1}(v) : r(e) \geq v \}| < \infty$. We let $$\FR (E) := \{ v \in E^0_\textnormal{sing} : \text{$v$ is a finite-return vertex} \}$$ denote the set of finite-return vertices of $E$.  Note that every element of $\FR (E)$ is an infinite emitter in $E$.  
\end{definition}

\begin{example} \label{FR-computed-ex}
We revisit the graph $E$ presented in Example~\ref{finitereturnexample}.  We see that $\FR (E) = \{ x \}$.  The infinite emitter $x$ is a finite-return vertex because there are exactly two edges (namely $f$ and $g$) having source $x$ and whose range can reach $x$.   Moreover, the infinite emitter $w$ is not a finite-return vertex, since none of the edges it emits have ranges that can reach $w$, and the infinite emitter $u$ is not a finite-return vertex, since $u$ emits infinitely many edges from $u$ to $v$ whose ranges can reach $u$.
\end{example}

\begin{remark}\label{finitereturnvertexiffinBsubH}
Let  $v_0 \in E^0_\textnormal{sing}$, and consider  the maximal tail $T_{v_0}$ corresponding to $v_0$.  Then the set $H := E^0 \setminus T_{v_0}$ is saturated hereditary.  By definition, the set of breaking vertices $B_H$ of $H$ is contained in $E^0 \setminus H = E^0 \setminus (E^0 \setminus T_{v_0}) = T_{v_0}$.   By the definition of a breaking vertex, we see immediately that
$$v_0 \in B_H \ \Longleftrightarrow \ v_0 \in \FR(E).$$
\end{remark}

\begin{remark}
Throughout this section we shall consider the disjoint union $$ \CL(E) \sqcup \FR(E).$$  The sets $\CL(E)$ and $\FR(E)$ are disjoint because elements of $\CL(E)$ are subsets of $E^0$, while elements of $\FR(E)$ are elements of $E^0$.  Note, however, that if $v \in E^0_\textnormal{sing}$ with $T_{v} = \{ v \}$ it is possible to have $\{ v \} \in \CL(E)$ and $v \in \FR(E)$.  This occurs in the  graph
$$
\xymatrix{
\bullet^v \ar@{=>}[r]^{\infty}  \ar@(ul,dl) & \bullet \ar@(ur,dr) \\
}  
$$
for instance.
\end{remark}

\smallskip

\begin{definition}
If $X \subseteq \CL(E) \sqcup \FR(E)$, we shall use the notation $X_{T} := X \cap \CL(E)$ and $X_{F} := X \cap \FR(E)$.  Note that $X = X_T \sqcup X_F$.  

For a subset $X \subseteq \CL(E) \sqcup \FR(E)$, we define
$$\mathcal{V}(X) := \bigcup_{C \in X_T} C \cup \bigcup_{v_0 \in X_F} T_{v_0} \subseteq E^0$$
and
\begin{align*}
\overline{X} := \{ C \in \CL(E) &: C \subseteq \mathcal{V}(X) \} \\
&\sqcup \{ v_0 \in \FR(E) : \text{there are infinitely many $e \in E^1$ with} \\
& \qquad \qquad \qquad \qquad \qquad \text{$s(e) =v_0$ and $r(e) \in \mathcal{V}(X)$} \}.
\end{align*}

This closure operation satisfies the Kuratowski closure axioms (see \cite[\S1.3]{HockingYoung} for more details on the Kuratowski closure axioms), and consequently there is a unique topology on $\CL(E) \sqcup \FR(E)$ such that the closed sets of this topology are those subsets $X \subseteq \CL(E) \sqcup \FR(E)$ with $\overline{X} = X$.  In addition, this topology restricts to a topology on the subspace $\MT (E) \sqcup \FR(E)$ where the closed sets are described by the closure operation
\begin{align*}
\overline{X} := \{ C \in \MT(E&) : C \subseteq \mathcal{V}(X) \} \\
&\sqcup \{ v_0 \in \FR(E) : \text{there are infinitely many $e \in E^1$ with} \\
& \qquad \qquad \qquad \qquad \qquad \text{$s(e) =v_0$ and $r(e) \in \mathcal{V}(X)$} \}
\end{align*}
for any $X \subseteq \MT (E) \sqcup \FR(E)$.
\end{definition}

Throughout the proof of the main result of this section (Theorem~\ref{SpecAandPrimAhomeomorphisms}) we will make frequent use of the following result.  Although the statement in \cite{BHRS} is given only for countable graphs, the proof goes through verbatim for arbitrary graphs.

\begin{proposition}[Proposition~3.9 of \cite{BHRS}] \label{BHRSProp}
Suppose $E$ is a graph and $\{ H_j \}_{j \in J}$ is a collection of saturated hereditary subsets of $E$ and that $S_j \subseteq B_{H_j}$ for each $j \in J$.  Let $H := \bigcap_{j \in J} H_j$ and $S := ( \bigcap_{j \in J} H_j \cup S_j) \cap B_H$.  Then $$\bigcap_{j \in J} I_{(H_j,S_j)} = I_{(H,S)}.$$
\end{proposition}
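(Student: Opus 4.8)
The plan is to prove the two inclusions separately: $I_{(H,S)} \subseteq \bigcap_{j} I_{(H_j,S_j)}$ by a direct computation on generators, and the reverse inclusion via the classification of gauge-invariant ideals. First I would check that $(H,S)$ is an admissible pair, so that $I_{(H,S)}$ is even defined: an intersection of saturated hereditary sets is saturated hereditary (if $s(e) \in H_j$ for every $j$, then $r(e) \in H_j$ for every $j$; if $v \in E^0_{\textnormal{reg}}$ with $r(s^{-1}(v)) \subseteq H$, then $r(s^{-1}(v)) \subseteq H_j$, so $v \in H_j$, for every $j$), and $S \subseteq B_H$ holds by construction.

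For $I_{(H,S)} \subseteq \bigcap_j I_{(H_j,S_j)}$ I would fix $j$ and verify that each generator of $I_{(H,S)}$ lies in $I_{(H_j,S_j)}$. Since $H \subseteq H_j$, we have $p_v \in I_{(H_j,S_j)}$ for all $v \in H$. If $v \in S$, then $v \in H_j \cup S_j$: when $v \in H_j$ we have $p_v^H = p_v p_v^H \in I_{(H_j,S_j)}$; when $v \in S_j$, then $p_v^{H_j} \in I_{(H_j,S_j)}$ and, using $H \subseteq H_j$, one has $p_v^H - p_v^{H_j} = -\sum_{\{e : s(e) = v,\ r(e) \in H_j \setminus H\}} s_e s_e^*$, each term of which equals $s_e p_{r(e)} s_e^* \in I_{(H_j,S_j)}$ since $p_{r(e)} \in I_{(H_j,S_j)}$; hence $p_v^H \in I_{(H_j,S_j)}$.

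For the reverse inclusion I would use that an arbitrary intersection of gauge-invariant ideals is gauge-invariant (the gauge action is an automorphism and so commutes with intersection), so by \cite[Theorem~3.6]{BHRS} there is a unique admissible pair $(H',S')$ with $\bigcap_j I_{(H_j,S_j)} = I_{(H',S')}$; it then suffices to show $(H',S') = (H,S)$. Recovering the pair from the ideal via $H' = \{v \in E^0 : p_v \in I_{(H',S')}\}$ and $S' = \{v \in B_{H'} : p_v^{H'} \in I_{(H',S')}\}$, and using $p_v \in I_{(H_j,S_j)} \iff v \in H_j$ together with $p_v^{H_j} \in I_{(H_j,S_j)} \iff v \in S_j$ for $v \in B_{H_j}$, one gets $H' = \bigcap_j H_j = H$ at once; and $S \subseteq S'$ is precisely the generator computation above, so only $S' \subseteq S$ remains. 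Here I would take $v \in B_H$ with $p_v^H \in \bigcap_j I_{(H_j,S_j)}$ and suppose, for contradiction, that $v \notin H_{j_0} \cup S_{j_0}$ for some $j_0$. Applying $q \colon C^*(E) \to C^*(E)/I_{(H_{j_0},S_{j_0})} \cong C^*(E \setminus (H_{j_0},S_{j_0}))$ (see \cite[Corollary~3.5]{BHRS}), and noting that $q(s_e s_e^*) = 0$ exactly when $r(e) \in H_{j_0}$ together with $H \subseteq H_{j_0}$, one computes $q(p_v^H) = q(p_v) - \sum_{\{e : s(e) = v,\ r(e) \notin H_{j_0}\}} q(s_e) q(s_e)^*$. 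If $v \in B_{H_{j_0}}$, then (since $v \notin S_{j_0}$) this is the image of the gap projection $p_v^{H_{j_0}}$, which corresponds under the isomorphism of \cite[Corollary~3.5]{BHRS} to the vertex projection at the added sink coming from $v$, hence is nonzero; if $v \notin B_{H_{j_0}}$, then since $r(s^{-1}(v)) \cap (E^0 \setminus H_{j_0}) \subseteq r(s^{-1}(v)) \cap (E^0 \setminus H)$ is finite (as $v \in B_H$) and $v$ is singular, we must have $r(s^{-1}(v)) \cap (E^0 \setminus H_{j_0}) = \emptyset$, so the sum is empty and $q(p_v^H) = q(p_v) = p_v \neq 0$. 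In either case $q(p_v^H) \neq 0$, contradicting $p_v^H \in \ker q$; hence $v \in H_{j_0} \cup S_{j_0}$ for every $j_0$, so $v \in S$. This yields $S' = S$ and finishes the proof.

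The main obstacle is this reverse inclusion, and within it the step showing $q(p_v^H) \neq 0$ in $C^*(E \setminus (H_{j_0},S_{j_0}))$: it requires combining the bookkeeping $H \subseteq H_{j_0}$ (to reduce $p_v^H$ to $p_v^{H_{j_0}}$ modulo $I_{(H_{j_0},S_{j_0})}$) with the standard identification from \cite{BHRS} of the image of a gap projection with a vertex projection at an added sink, and uses $v \in B_H$ to rule out degenerate behaviour of $v$ in the quotient graph. The admissibility check and the generator inclusion are routine.
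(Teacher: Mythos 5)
Your argument is correct. The paper does not actually prove this proposition---it is imported from \cite[Proposition~3.9]{BHRS} with only the remark that the proof carries over verbatim to uncountable graphs---and your two-inclusion argument (the generator computation for $I_{(H,S)} \subseteq \bigcap_j I_{(H_j,S_j)}$, and for the reverse containment the classification of gauge-invariant ideals by admissible pairs together with the identification of the surviving gap projection, respectively vertex projection, with a nonzero projection in $C^*(E \setminus (H_{j_0},S_{j_0}))$) is precisely the standard argument that reference supplies, with all the delicate bookkeeping ($H \subseteq H_{j_0}$ forcing $q(p_v^H) = q(p_v^{H_{j_0}})$, and $v \in B_H$ ruling out $v \notin B_{H_{j_0}}$ with edges leaving $H_{j_0}$) handled correctly.
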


We are now in position to describe, in terms of subsets of the underlying graph $E$,  the prime spectrum and primitive ideal space of $C^*(E)$ when $E$ satisfies Condition (K).

\begin{theorem}\label{SpecAandPrimAhomeomorphisms}
Let $E = (E^0, E^1, r, s)$ be a graph satisfying Condition~(K).  Define a function $h : \CL(E) \sqcup \FR(E) \to \spec A$ as follows:
\begin{center}
for $C \in \CL (E)$, we set $h(C) := I_{(H,B_H)}$ where $H := E^0 \setminus C$, and \\ 
 for $v_0 \in \FR(E)$, we set $h(v_0) := I_{(H, B_H \setminus \{ v_0 \})}$ where $H := E^0 \setminus T_{v_0}$.
\end{center}   Then $h$ is a homeomorphism from $\CL(E) \sqcup \FR(E)$ onto $\spec A$, and $h$ restricts to a homeomorphism from $\MT(E) \sqcup \FR(E)$ onto $\prim A$.
\end{theorem}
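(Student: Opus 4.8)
The plan is to verify that $h$ is a well-defined bijection and then that it is a homeomorphism, the latter by showing that $h$ carries the closure operation on $\CL(E)\sqcup\FR(E)$ to the closure operation on $\spec A$. For well-definedness: if $C\in\CL(E)$ then $C$ satisfies (MT1)--(MT3) by Proposition~\ref{MT-axioms-tails-prop}(2), and (MT1)--(MT2) say precisely that $H:=E^0\setminus C$ is saturated hereditary, so $(H,B_H)$ is admissible and Theorem~\ref{prime-primitive-ideal-description-theorem}(3)(a) shows $h(C)=I_{(H,B_H)}$ is prime; if $v_0\in\FR(E)$, then $H:=E^0\setminus T_{v_0}$ is saturated hereditary with $v_0\in B_H$ by Remark~\ref{finitereturnvertexiffinBsubH}, so $(H,B_H\setminus\{v_0\})$ is admissible and Theorem~\ref{prime-primitive-ideal-description-theorem}(2)(b) shows $h(v_0)=I_{(H,B_H\setminus\{v_0\})}$ is primitive, hence prime. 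So $h$ maps into $\spec A$, and into $\prim A$ on $\MT(E)$ (by Theorem~\ref{prime-primitive-ideal-description-theorem}(2)(a)) and on $\FR(E)$ (by Theorem~\ref{prime-primitive-ideal-description-theorem}(2)(b)).

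Next I would establish that $h$ is a bijection. Surjectivity onto $\spec A$ is exactly Theorem~\ref{prime-primitive-ideal-description-theorem}(3): a prime ideal is either $I_{(H,B_H)}$ with $E^0\setminus H\in\CL(E)$, hence $=h(E^0\setminus H)$, or $I_{(H,B_H\setminus\{v_0\})}$ with $E^0\setminus H=T_{v_0}$ and $v_0\in B_H$, in which case $v_0\in\FR(E)$ by Remark~\ref{finitereturnvertexiffinBsubH} and the ideal is $h(v_0)$; likewise $h(\MT(E)\sqcup\FR(E))=\prim A$ by Theorem~\ref{prime-primitive-ideal-description-theorem}(2). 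For injectivity, recall that under Condition~(K) the assignment $(H,S)\mapsto I_{(H,S)}$ is a bijection from admissible pairs onto ideals (\cite[Theorem~3.6]{BHRS}, \cite[Theorem~2.1.19]{Tom9}). Thus $h(C)=h(C')$ gives $E^0\setminus C=E^0\setminus C'$, so $C=C'$; $h(v_0)=h(v_0')$ gives $E^0\setminus T_{v_0}=E^0\setminus T_{v_0'}$ and $B_H\setminus\{v_0\}=B_H\setminus\{v_0'\}$ for the common $H$, which with $v_0,v_0'\in B_H$ forces $v_0=v_0'$; and $h(C)=h(v_0)$ would force $B_{E^0\setminus C}=B_{E^0\setminus C}\setminus\{v_0\}$, contradicting $v_0\in B_{E^0\setminus C}$.

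For the homeomorphism it suffices, since $h$ is bijective, to prove $h(\overline X)=\overline{h(X)}$ for every $X\subseteq\CL(E)\sqcup\FR(E)$. Fix $X$ and put $M:=\mathcal V(X)$; as a union of sets each satisfying (MT1)--(MT2), $M$ satisfies (MT1)--(MT2), so $H_X:=E^0\setminus M$ is saturated hereditary. Applying Proposition~\ref{BHRSProp} to the admissible pairs $(H_j,S_j)$ indexing the members of $h(X)$ gives $\bigcap_{J\in h(X)}J=I_{(H_X,S_X)}$, where $H_X=E^0\setminus M$ is the intersection of the $H_j$ and $S_X=\bigl(\bigcap_j(H_j\cup S_j)\bigr)\cap B_{H_X}$, so $\overline{h(X)}=\{I\in\spec A:\ I_{(H_X,S_X)}\subseteq I\}$. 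I would then use the comparison $I_{(H_1,S_1)}\subseteq I_{(H_2,S_2)}\iff H_1\subseteq H_2$ and $S_1\subseteq H_2\cup S_2$, which follows by combining Proposition~\ref{BHRSProp} (for the two-element family) with the injectivity just established. Inserting the formulas for $h$: for $D\in\CL(E)$, $h(D)\in\overline{h(X)}$ iff $D\subseteq M$ together with the breaking-vertex condition $S_X\subseteq(E^0\setminus D)\cup B_{E^0\setminus D}$; for $v_0\in\FR(E)$, $h(v_0)\in\overline{h(X)}$ iff $T_{v_0}\subseteq M$ and $v_0\notin S_X$ (here the remaining elements of $S_X$ land automatically in $(E^0\setminus T_{v_0})\cup(B_{E^0\setminus T_{v_0}}\setminus\{v_0\})$, since each $w\in T_{v_0}\setminus\{v_0\}$ emits an edge into $T_{v_0}$). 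Unwinding these through the description of $S_X$ and the fact that a finite-return vertex has only finitely many return edges shows they agree with the membership conditions defining $\overline X$, so $h(\overline X)=\overline{h(X)}$. Finally, since $\MT(E)\sqcup\FR(E)$ carries the subspace topology inside $\CL(E)\sqcup\FR(E)$ (its closure operation is the restriction of the ambient one) and $\prim A$ carries the subspace topology inside $\spec A$, the homeomorphism $h$ restricts to a homeomorphism $\MT(E)\sqcup\FR(E)\to\prim A$.

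The step I expect to be the main obstacle is exactly this last bookkeeping: controlling $S_X=\bigl(\bigcap_j(H_j\cup S_j)\bigr)\cap B_{H_X}$ precisely enough to translate the conditions ``$S_X\subseteq(E^0\setminus D)\cup B_{E^0\setminus D}$'' and ``$v_0\notin S_X$'' into the combinatorial closure conditions. The first-coordinate conditions $D\subseteq M$ and $T_{v_0}\subseteq M$ are immediate, but the gap-projection data recorded by $S_X$ is what makes the closure operation delicate, and pinning it down requires a careful case analysis of whether a given breaking vertex of $H_X$ lies in $D$ (respectively in $T_{v_0}$) and whether it emits an edge back into that set.
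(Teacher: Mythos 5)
Your overall strategy is the same as the paper's: get bijectivity from Theorem~\ref{prime-primitive-ideal-description-theorem}, then prove $h(\overline X)=\overline{h(X)}$ by computing $\bigcap_{J\in h(X)}J$ with Proposition~\ref{BHRSProp}. The genuine gap is the step you yourself flag as the ``main obstacle'': translating the ideal-containment criterion into the combinatorial closure conditions. That translation is the entire content of the theorem, and deferring it with ``unwinding \dots shows they agree'' leaves the proof unfinished. Worse, your own (correctly derived) intermediate criterion shows the translation does not go through as asserted. You note that for $D\in\CL(E)$ one has $h(D)\in\overline{h(X)}$ if and only if $D\subseteq\mathcal V(X)$ \emph{and} $S_X\subseteq(E^0\setminus D)\cup B_{E^0\setminus D}$, whereas membership of $D$ in $\overline X$ requires only $D\subseteq\mathcal V(X)$; the second condition is not automatic. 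Take $E^0=\{v,u,w\}$ with one edge $e\colon v\to u$ and infinitely many edges $v\to w$ (no cycles, so Condition~(K) holds and $\FR(E)=\emptyset$). Then $D:=T_v=\{v\}$ and $C:=T_u=\{v,u\}$ are maximal tails with $D\subseteq C$. For $X=\{C\}$ one computes $S_X=B_{\{w\}}=\{v\}$, while $(E^0\setminus D)\cup B_{E^0\setminus D}=\{u,w\}\cup\emptyset$ does not contain $v$, since no edge from $v$ lands back in $\{v\}$. Concretely, $h(C)=I_{(\{w\},\{v\})}$ contains the gap projection $p_v-s_es_e^*$, which maps to $1$ under the quotient $C^*(E)/h(D)\cong\C$, so $h(C)\not\subseteq h(D)$ and $h(D)\notin\overline{h(X)}$, even though $D\in\overline X$.

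So the obstruction you anticipated is real and cannot be removed by more careful bookkeeping: the stated closure operation on $\CL(E)\sqcup\FR(E)$ does not record the breaking-vertex data carried by $S_X$, and your reduction identifies exactly where the two sides diverge. (The paper's own proof elides the same point: in Case~I of the forward inclusion it deduces $\bigcap_{\lambda\in X}h(\lambda)\subseteq I_{(H_C,B_{H_C})}$ from $\bigcap_\lambda H_\lambda\subseteq H_C$ alone, which is precisely the inference your criterion shows to be invalid.) To complete an argument along these lines you must either restrict to situations where $S_X\subseteq(E^0\setminus D)\cup B_{E^0\setminus D}$ is automatic (e.g.\ row-finite graphs, where every $S_\lambda$ is empty), or build the breaking-vertex condition into the closure operation, as in the Hong--Szyma\'nski description. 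The finite-return case needs the analogous care: matching ``$T_{v_0}\subseteq\mathcal V(X)$ and $v_0\notin S_X$'' with ``infinitely many edges from $v_0$ into $\mathcal V(X)$'' requires showing that when $v_0\in B_{H_X}\setminus S_X$ one in fact has $v_0\in X$ itself, a separate argument you have not supplied.
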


\begin{proof}
Throughout the proof, we will find it convenient to use the following notation: For $C \in \CL(E)$, let $H_C := E^0 \setminus C$, and for $v_0 \in \FR(E)$, let $H_{v_0} := E^0 \setminus T_{v_0}$.  Note that for any $X \subseteq \CL(E) \sqcup \FR(E)$, we have by definition that $\mathcal{V}(X) := \bigcup_{C \in X_T} C \cup \bigcup_{v_0 \in X_F} T_{v_0}$, and De Morgan's laws imply $E^0 \setminus \mathcal{V}(X) = \bigcap_{\lambda \in X} H_\lambda$.

It follows from the proof of Theorem~\ref{prime-primitive-ideal-description-theorem}  that $h : \CL(E) \sqcup \FR(E) \to \spec A$ is a bijection, and that $h$ restricts to a bijection from $\MT(E) \sqcup \FR(E)$ onto $\prim A$.  Thus we need only show that $h : \CL(E) \sqcup \FR(E) \to \spec A$ is continuous with continuous inverse.  It suffices to show that $h (\overline{X}) = \overline{h(X)}$ for all $X \subseteq \CL(E) \sqcup \FR(E)$.  To this end, let $X \subseteq \CL(E) \sqcup \FR(E)$.    

If $x \in h (\overline{X})$, then $x = h(\gamma)$ for some $\gamma \in \overline{X}$.  We will consider two cases: $\gamma \in \CL(E)$ and $\gamma \in \FR(E)$.

\smallskip

\noindent \textsc{Case I:} $\gamma = C \in \CL(E)$.  The fact $C \in \overline{X}$ implies $C \subseteq \mathcal{V}(X)$.  Hence $E^0 \setminus  \mathcal{V}(X) \subseteq E^0 \setminus C$, and $\bigcap _{\lambda \in X} H_\lambda \subseteq H_C$.  Proposition~\ref{BHRSProp} implies that $\bigcap_{\lambda \in X} h(\lambda) = I_{(H, S)}$ where $H = \bigcap _{\lambda \in X} H_\lambda$ and $S \subseteq B_H$.  Thus $\bigcap_{\lambda \in X} h(\lambda) \subseteq I_{(H_C, B_{H_C})}$, and $\bigcap_{h(\lambda) \in h(X)} h(\lambda) \subseteq h(C)$, so that $x = h(C) \in \overline{h(X)}$.

\smallskip

\noindent \textsc{Case II:} $\gamma = v_0 \in \FR(E)$.   The fact that $v_0 \in \overline{X}$ implies the set
\begin{equation} \label{inf-set-eq}
\{ e \in E^1 : s(e) = v_0 \text{ and } r(e) \in \mathcal{V}(X) \}
\end{equation}
is infinite.  In particular, $v_0 \geq \mathcal{V}(X)$, and since $\mathcal{V}(X)$ is a union of maximal tails, property~(MT1) implies that $v_0 \in \mathcal{V}(X)$ and $T_{v_0} \subseteq \mathcal{V}(X)$.  Hence $E^0 \setminus \mathcal{V}(X) \subseteq E^0 \setminus T_{v_0}$ and $\bigcap _{\lambda \in X} H_\lambda \subseteq H_{v_0}$.  Proposition~\ref{BHRSProp} implies that $\bigcap_{\lambda \in X} h(\lambda) = I_{(H, S)}$ where $H = \bigcap _{\lambda \in X} H_\lambda$ and $S \subseteq B_H$.  Thus $\bigcap_{\lambda \in X} h(\lambda) \subseteq I_{(H_{v_0}, B_{H_{v_0}})}$.  In addition, since the set in \eqref{inf-set-eq} is infinite, using property (MT1) we see that there are infinitely many edges beginning at $v_0$ and ending in $\mathcal{V}(X)$, and hence there are infinitely many edges beginning at $v_0$ and ending outside of $E^0 \setminus \mathcal{V}(X) =  \bigcap _{\lambda \in X} H_\lambda = H$.  Thus $v_0$ is not a breaking vertex of $H$.  It follows that $\bigcap_{\lambda \in X} h(\lambda) \subseteq I_{(H_{v_0}, B_{H_{v_0}} \setminus \{ v_0 \})}$, and $\bigcap_{h(\lambda) \in h(X)} h(\lambda) \subseteq h(v_0)$, so that $x = h(v_0) \in \overline{h(X)}$.

Combining Case I and Case II above, we have shown $h(\overline{X}) \subseteq \overline{h(X)}$.    For the reverse inclusion, suppose that $x \in \overline{h(X)}$.  By the surjectivity of $h$, we have $x = h(\gamma)$ for some $\gamma \in \CL(E) \sqcup \FR(E)$.  Again, we will consider two cases: $\gamma \in \CL(E)$ and $\gamma \in \FR(E)$.

\smallskip

\noindent \textsc{Case I:} $\gamma = C \in \CL(E)$.  The fact that $h(C) \in \overline{h(X)}$ and the bijectivity of $h$ imply that $\bigcap_{h(\lambda) \in h(X)} h(\lambda) \subseteq h(C)$, and $\bigcap_{\lambda \in X} h(\lambda) \subseteq I_{(H_C, B_{H_C})}$.  Since $h(\lambda)$ has the form $I_{(H_\lambda, S_\lambda)}$ for some $S_\lambda \subseteq B_{H_\lambda}$, it follows from Proposition~\ref{BHRSProp} that $\bigcap_{\lambda \in X} h(\lambda) = I_{(H, S)}$ where $H = \bigcap _{\lambda \in X} H_\lambda$ and $S \subseteq B_H$.  The fact that $\bigcap_{\lambda \in X} h(\lambda) \subseteq I_{(H_C, B_{H_C})}$ then implies that $H \subseteq H_C$, so that $\bigcap _{\lambda \in X} H_\lambda \subseteq H_C$ and $E^0 \setminus \mathcal{V}(X) \subseteq E^0 \setminus C$.  Thus $C \subseteq  \mathcal{V}(X)$ and $C \in \overline{X}$.  Hence $x = h(C) \in h(\overline{X})$.

\smallskip

\noindent \textsc{Case II:} $\gamma = v_0 \in \FR(E)$.   Since $h(v_0) \in \overline{h(X)}$, we have $\bigcap_{h(\lambda) \in h(X)} h(\lambda) \subseteq h(v_0)$, and using the bijectivity of $h$ we obtain $\bigcap_{\lambda \in X} h(\lambda) \subseteq I_{(H_{v_0}, B_{H_{v_0}} \setminus \{ v_0 \})}$.  Proposition~\ref{BHRSProp} implies that $\bigcap_{\lambda \in X} h(\lambda)  = I_{(H,S)}$ where $H = \bigcap _{\lambda \in X} H_\lambda$ and $S \subseteq B_H$.  Since $I_{(H,S)} \subseteq I_{(H_{v_0}, B_{H_{v_0}} \setminus \{ v_0 \})}$, it follows that $v_0 \notin H$ and $v_0 \notin S$.  Let us consider two possibilities: $v_0 \in B_H$ and $v_0 \notin B_H$.

\smallskip

\noindent \textsc{Subcase II}(a): $v_0 \in B_H$.  We show in this subcase that in fact $v_0\in X$.  Since $v_0$ is a breaking vertex of $H$, the set $\{ e \in E^1 : s(e) = v_0 \text{ and } r(e) \notin H \}$ is finite.  In addition, since $v_0 \notin S$, it follows $p_{v_0} - \sum_{s(e)= v_0, r(e) \notin H} s_es_e^* \notin I_{(H,S)} = \bigcap_{\lambda \in X} h(\lambda)$.  Thus there exists 
$\delta \in X$ such that $p_{v_0} - \sum_{s(e)= v_0, r(e) \notin H} s_es_e^* \notin h(\delta)$.  In particular, this implies that $p_{v_0} \notin h(\delta)$ and $v_0 \notin H_\delta$.  Since $H \subseteq H_\delta$, we see that $$\{ e \in E^1 : s(e) = v_0 \text{ and } r(e) \notin H_\delta \} \subseteq \{ e \in E^1 : s(e) = v_0 \text{ and } r(e) \notin H \},$$ and hence $\{ e \in E^1 : s(e) = v_0 \text{ and } r(e) \notin H_\delta \}$ is finite.  Because $v_0$ is a finite-return vertex, $v_0 \notin H_\delta$, and $H_\delta$ is hereditary, there exists at least one edge from $v_0$ to $E^0 \setminus H_\delta$.  Thus $0 < |\{ e \in E^1 : s(e) = v_0 \text{ and } r(e) \notin H_\delta \}| < \infty$ and $v_0$ is a breaking vertex of $H_\delta$; i.e.,   $v_0 \in B_{H_\delta}$.  In addition, if $e \in E^1$ with $r(e) \in H_\delta$, then $p_{r(e)} \in I_{(H_\delta, \emptyset)} \subseteq h(\delta)$, and $s_es_e^* = s_ep_{r(e)} s_e^* \in h(\delta)$.  Since $\{ e \in E^1 : s(e) = v_0 \text{ and } r(e) \in H_\delta \setminus H \}$ is a finite set, $$\sum_{s(e) = v_0, r(e) \notin H_\delta \setminus H} s_es_e^* \in h(\delta).$$  Because $p_{v_0} - \sum_{s(e)= v_0, r(e) \notin H} s_es_e^* \notin h(\delta)$ and $H \subseteq H_\delta$, it follows that

\begin{align*}
p_{v_0} - &\sum_{s(e) =v_0, r(e) \notin H_\delta} s_es_e^*   =  p_{v_0} -  \left( \sum_{s(e)= v_0, r(e) \notin H} s_es_e^* + \sum_{s(e) = v_0, r(e) \notin H_\delta \setminus H} s_es_e^* \right)\\
& \qquad \qquad =  \left(p_{v_0} -  \sum_{s(e)v_0, r(e) \notin H_\delta} s_es_e^* \right) - \sum_{s(e) = v_0, r(e) \notin H_\delta \setminus H} s_es_e^* \notin h(\delta) \\
\end{align*}
Thus the gap projection $p_{v_0} - \sum_{s(e)=v_0, r(e) \notin H_\delta} s_es_e^*$ is not in $h(\delta)$, and hence by the way $h$ is defined $h(\delta)$ must be equal to $I_{(H_{v_0}, B_{H_{v_0}} \setminus \{ v_0 \})}$, and $\delta = v_0$.  Thus $v_0 = \delta \in X \subseteq \overline{X}$ and $x = h(v_0) \in h(\overline{X})$.

\smallskip

\noindent \textsc{Subcase II}(b):  $v_0 \notin B_H$.  Then $v_0$ is not a breaking vertex of $H$.  Since $v_0 \notin H$, the set $H$ is hereditary, and $v_0$ is a finite-return vertex, there is at least one edge from $v_0$ to $E^0 \setminus H$.  Because $v_0$ is not a breaking vertex of $H$, it then follows that $$\{ e \in E^1 : s(e) = v_0 \text{ and } r(e) \notin H \}$$ is an infinite set.  But $r(e) \notin H = \bigcap_{\lambda \in X} H_\lambda$ implies $r(e) \in E^0 \setminus \bigcap_{\lambda \in X} H_\lambda = \bigcup_{\lambda \in X} E^0 \setminus H_\lambda = \mathcal{V}(X)$.  Thus $$\{ e \in E^1 : s(e) = v_0 \text{ and } r(e) \in \mathcal{V}(X) \}$$ is an infinite set. By definition, $v_0 \in \overline{X}$, so that $x = h(v_0) \in h(\overline{X})$.

\smallskip

Combining Case I and Case II above, we have that $ \overline{h(X)} \subseteq h(\overline{X})$.  Hence  $h (\overline{X}) = \overline{h(X)}$ for all $X \subseteq \CL(E) \sqcup \FR(E)$, and $h$ is a homeomorphism.
\end{proof}

When $E$ has no finite-return vertices (which occurs, for example, if $E$ is either a row-finite graph or a graph with no cycles), then the prime spectrum and primitive ideal space of $C^*(E)$ take a much simpler form.  Since there are no finite-return vertices, the prime spectrum is homeomorphic to the space $\CL(E)$ of clusters of maximal tails, and the primitive ideal space is homeomorphic to the space $\MT (E)$ of maximal tails.  Moreover, the closure operation in the topology is easier to describe.  We give the details of this in the following corollary.

\begin{corollary} \label{RF-MaxTail-cor}
Let $E = (E^0, E^1, r, s)$ be a graph satisfying Condition~(K) and with no finite-return vertices. (This occurs, for example, if $E$ is a row-finite graph satisfying Condition~(K) or if $E$ is a graph with no cycles.)  Give $\CL(E)$ the topology whose closed sets are determined by the closure operation
$$\overline{X} := \{ C \in \CL(E): C \subseteq \bigcup_{D \in X} D \}$$ for $X \subseteq \CL(E)$.  Also give $\MT(E)$ the subspace topology it inherits as a subset of $\CL (E)$.  Define a function $h : \CL(E) \to \spec A$ by
$$h(C) := I_{(H_C,B_{H_C})}$$ where $H_C := E^0 \setminus C$.
Then $h$ is a homeomorphism from $\CL(E)$ onto $\spec A$, and $h$ restricts to a homeomorphism from $\MT(E)$ onto $\prim A$.
\end{corollary}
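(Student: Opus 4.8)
The plan is to derive this corollary directly from Theorem~\ref{SpecAandPrimAhomeomorphisms} by checking that the hypothesis $\FR(E) = \emptyset$ collapses both the indexing set and the topology in that theorem to the simpler forms asserted here. First I would dispose of the parenthetical claims: if $E$ is row-finite then $E$ has no infinite emitters, and since every element of $\FR(E)$ is an infinite emitter we get $\FR(E) = \emptyset$; and if $E$ has no cycles, then for any infinite emitter $v$ the set $\{ e \in s^{-1}(v) : r(e) \geq v \}$ must be empty, since an edge $e$ with $s(e) = v$ and $r(e) \geq v$ would produce a cycle through $v$, so again $\FR(E) = \emptyset$. Hence in either listed case the hypothesis of the corollary holds.

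Next, assuming $\FR(E) = \emptyset$, I would observe that $\CL(E) \sqcup \FR(E) = \CL(E)$ and $\MT(E) \sqcup \FR(E) = \MT(E)$, and that under this identification the function $h$ defined in the corollary is literally the function $h$ of Theorem~\ref{SpecAandPrimAhomeomorphisms} (the $\FR$-branch of that definition never applies). It then remains only to check that the topology on $\CL(E)$ described in the corollary coincides with the one transported from Theorem~\ref{SpecAandPrimAhomeomorphisms}. For any $X \subseteq \CL(E)$ we have $X_F = X \cap \FR(E) = \emptyset$ and $X_T = X$, so that $\mathcal{V}(X) = \bigcup_{C \in X_T} C = \bigcup_{D \in X} D$; moreover the $\FR(E)$-summand in the definition of the closure $\overline{X}$ from Theorem~\ref{SpecAandPrimAhomeomorphisms} is empty. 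Thus that closure operation reduces to $\overline{X} = \{ C \in \CL(E) : C \subseteq \bigcup_{D \in X} D \}$, which is exactly the closure operation in the statement of the corollary. Since the two closure operations agree as set functions, the Kuratowski axioms are inherited and nothing more need be verified about the topology being well-defined.

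Finally, Theorem~\ref{SpecAandPrimAhomeomorphisms} now asserts verbatim that $h$ is a homeomorphism from $\CL(E)$ onto $\spec A$ and restricts to a homeomorphism from $\MT(E)$ onto $\prim A$, where $\MT(E)$ carries the subspace topology inherited from $\CL(E)$ --- and by the previous paragraph this is precisely the topology described in the corollary. I do not expect any genuine obstacle: the whole argument is the bookkeeping observation that the hypothesis $\FR(E)=\emptyset$ deletes the second disjoint summand everywhere it appears and simplifies $\mathcal{V}(X)$ to $\bigcup_{D\in X} D$.
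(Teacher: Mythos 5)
Your proposal is correct and matches the paper's intent exactly: the corollary is stated as an immediate consequence of Theorem~\ref{SpecAandPrimAhomeomorphisms}, obtained precisely by the bookkeeping you describe (the hypothesis $\FR(E)=\emptyset$ deletes the $\FR$-summand everywhere and reduces $\mathcal{V}(X)$ to $\bigcup_{D\in X}D$). Your verification of the two parenthetical cases is also the right argument, so nothing further is needed.
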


\begin{remark}
In \cite[Theorem~6.3]{BPRS} the authors describe the primitive ideal space of the $C^*$-algebra of a row-finite countable graph satisfying Condition~(K), and their result is a special case of Corollary~\ref{RF-MaxTail-cor}.  Although the authors of \cite{BPRS} describe the closure operation in \cite[Theorem~6.3]{BPRS} as $\overline{X} := \{ C \in \CL(E): C \geq \bigcup_{D \in X} D \}$, we point out that property~(MT1) of unions of maximal tails implies that $C \geq \bigcup_{D \in X} D$ if and only if $C \subseteq \bigcup_{D \in X} D$.
\end{remark}

\begin{example}
We once again revisit the graph $E$ presented in Example~\ref{finitereturnexample} and consider the topology on $\prim C^*(E)$.  In Example~\ref{finitereturnexample} we computed the maximal tails of $E$ and primitive ideals of $C^*(E)$, and in Example~\ref{FR-computed-ex} we showed that $\FR(E) = \{ x \}$.  Recall that $H_4 = \{t\}$, and that the maximal tail $T_4 = \{ u,v,w,x,y,z\} = E^0 \setminus \{t\}$ may be viewed as $T_\alpha$ for any infinite path $\alpha$ based at $z$.  For clarity, we choose $\alpha = eee\cdots$.   Let $$\{ I_{(H_{4}, B_{H_{4}})} \}$$ be the singleton subset of $\prim C^*(E)$ containing only the primitive ideal $I_{(H_{4}, B_{H_{4}})}$.  This subset corresponds to the singleton set $X = \{ T_{eee\cdots} \}$ in $\MT (E) \sqcup \FR(E)$, and we see that  $\mathcal{V} (X) = \{ u, v, w, x, y, z \}$ and $\overline{X} = \{T_2, T_3, T_{4} \} \sqcup \{ x \}$.  Thus the closure of the singleton set $\{ I_{(H_{4}, B_{H_{4}})} \}$ in $\prim C^*(E)$ is 
$$\overline{ \{ I_{(H_{4}, B_{H_{4}})} \} } = \{ I_{(H_2, B_{H_2})}, \  I_{(H_3, B_{H_3})}, \  I_{(H_{4}, B_{H_{4}})}, 
\ I_{(H_2, B_{H_2} \setminus \{ x \})} \},$$ 
which contains exactly four points of $\prim C^*(E)$.  In particular, since $\prim C^*(E)$ contains singleton sets that are not closed,  $\prim C^*(E)$ is not Hausdorff.
\end{example}

\begin{example}\label{primenotprimitiveexample}
Let $X$ be any uncountable set and let $E_A(X)$ be the graph from Example~\ref{EsubTsubXexample}.  We shall use Theorem~\ref{SpecAandPrimAhomeomorphisms} to give a description of $\spec C^*(E_A(X))$ and $\prim C^*(E_A(X))$ in terms of $X$.

Since $X$ is uncountable, $E_A(X)$ has an uncountable number of vertices, and every vertex in $E_A(X)$ is an infinite emitter. We also observe that $E_A(X)$ has no cycles, and therefore $E_A(X)$ satisfies Condition~(K) and $E_A(X)$ has no finite-return vertices.  It follows from Theorem~\ref{SpecAandPrimAhomeomorphisms} (and Corollary~\ref{RF-MaxTail-cor}) that $\spec C^*(E_A(X))$ is homeomorphic to $\CL(E)$, and that $\prim C^*(E_A(X))$ is homeomorphic to $\MT(E)$.  

Let $P(X)$ denote the collection of nonempty subsets of $X$, and let $P_\textnormal{co} (X)$ denote the collection of nonempty countable subsets of $X$.  For each $S \in P(X)$ define $$\gamma_S := \{ v_A : \text{$A$ is a nonempty finite subset of $S$} \}.$$  It is easy to show that $\gamma_S$ satisfies properties (MT1) and (MT3), and since every vertex in $E_A(X)$ is an infinite emitter, $\gamma_S$ trivially satisfies (MT2).  In addition, $\gamma_S$ satisfies (MT4) if and only if $S$ is countable.  Thus $\gamma_S$ is a cluster of maximal tails for any $S \in P(X)$, and $\gamma_S$ is a maximal tail if and only if $S \in P_\textnormal{co} (X)$.

Moreover, we see that the map $S \mapsto \gamma_S$ is a bijection from $P(X)$ onto $\CL (E_A(X))$ with inverse given by $\gamma \mapsto \bigcup_{v_A \in \gamma} A$, and that this bijection restricts to a bijection from $P_\textnormal{co} (X)$ onto $\MT(E_A(X))$.

Give $P(X)$ the topology whose closed sets are determined by the following closure operation: For $\mathcal{S} \subseteq P(X)$ define
\begin{align*}
\overline{ \mathcal{S} } := \{ T \in P(X) : \text{for} & \text{ each finite subset $A \subseteq T$ there} \\
 & \text{ exists $S \in \mathcal{S}$ with $A \subseteq S$} \}.
\end{align*}
It is straightforward to verify that this closure operation satisfies the Kuratowski closure axioms, 
and thus defines a unique topology on $P(X)$ for which the closed sets are those subsets $\mathcal{S} \subseteq P(X)$ with $\overline{\mathcal{S}} = \mathcal{S}$.  Let us define $\phi : P(X) \to \CL (E)$ by $\phi(S) := \gamma_S$.  We have shown above that $\phi$ is a bijection.  We shall now show that $\phi$ is also a homeomorphism.  To do this it will be convenient to apply the following claim, for which we provide a proof.

\smallskip

\noindent \textbf{Claim:} $\overline{ \{ \gamma_S : S \in \mathcal{S} \} } = \{ \gamma_S : S \in \overline{\mathcal{S}} \}$.

\smallskip

{\it Proof.}  Let $\gamma_T \in \overline{ \{ \gamma_S : S \in \mathcal{S} \} }$.  Then $\gamma_T \subseteq \bigcup_{S \in \mathcal{S}} \gamma_S$.  For each nonempty finite set $A \subseteq T$, we have $v_A \in \gamma_T$ and hence there exists $S \in \mathcal{S}$ with $v_A \in \gamma_S$, so that $A \subseteq S$.  If follows from the definition of $\overline{S}$ that $T \in \overline{S}$, and hence $\gamma_T$ is in the set on the right-hand-side of the above equation.

For the reverse inclusion, let $\gamma_T$ be in the right-hand-side of the above equation; that is, consider $\gamma_T$ for $T \in \overline{S}$.  Then for any $v_A \in \gamma_T$, it follows that $A$ is a finite subset of $T$, and since $T \in \overline{\mathcal{S}}$ there exists $S \in \mathcal{S}$ such that $A \subseteq S$.  Hence $v_A \in \gamma_S$.  Thus $\gamma_T \subseteq \bigcup_{S \in \mathcal{S}} \gamma_S$, and $\gamma_T$ is in the left-hand-side of the above equation, thereby establishing the Claim. 
\smallskip

Using the Claim, we see that for any subset $\mathcal{S} \subseteq P(X)$, we have
$$\overline{\phi(\mathcal{S})} = \overline{\{ \gamma_S : S \in \mathcal{S} \}} = \{ \gamma_S : S \in \overline{\mathcal{S}} \} = \phi (\overline{\mathcal{S}})$$
Thus $\phi : P(X) \to \CL (E)$ is a homeomorphism, and $\phi$ restricts to a homeomorphism $\phi|_{P_\textnormal{co}} : P_\textnormal{co} (X) \to \MT (E)$.  Hence $\spec C^*(E_A(X))$ is homeomorphic to $P(X)$ with the topology defined above, and $\prim C^*(E_A(X))$ is homeomorphic to $P_\textnormal{co} (X)$ with the topology defined above.
\end{example}

\end{document}